\title{{\bf Noetherian Rings Whose Annihilating-Ideal Graphs Have finite Genus}}
\author{{{\bf ${{\rm { {\bf F. ~Aliniaeifard }}}}$, ~${{\rm {\bf M.~ Behboodi}}}$}}\thanks
{ The research of the second author was  supported in part by a grant from IPM (No. 90160034).} ~{\bf and}
~${{\rm {\bf Y.~ Li}}}$\thanks
{ Corresponding author: Yuanlin Li, Department of Mathematics, Brock University, St. Catharines, Ontario,
Canada L2S 3A1. Fax:(905) 378-5713; E-mail address: yli@brocku.ca}
\thanks{ The research of the first and the third authors was supported in part by a Discovery Grant from the Natural Sciences and Engineering Research
Council of Canada.}
\thanks{The research of third author was also supported in part by the National Natural Science Foundation of China (No. 11271250).}\\
\\
\footnotesize{${^{\rm }}$}\vspace{-1mm}
}
 \def\Ann{{\rm Ann }}
\def\be{\begin{enumerate}}
\def\ee{\end{enumerate}}
\newtheorem{ttheo}{Theorem}[section]
\newtheorem{ccoro}[ttheo]{Corollary}
\newtheorem{llem}[ttheo]{Lemma}
\newtheorem{eexam}[ttheo]{Example}
\newtheorem{rrem}[ttheo]{Remark}
\newtheorem{ppro}[ttheo]{Proposition}
\newenvironment{pproof}{\noindent{\bf Proof. }}{}
\date{}
\begin{document}
  \maketitle
\begin{abstract}
{ Let $R$ be a commutative ring and ${\Bbb{A}}(R)$ be the set of
ideals with non-zero annihilators. The annihilating-ideal graph of
$R$ is defined as the graph ${\Bbb{AG}}(R)$ with  vertex set
${\Bbb{A}}(R)^*={\Bbb{A}}\setminus\{(0)\}$ such that two distinct
vertices $I$ and $J$ are adjacent if and only if $IJ=(0)$.  We
characterize commutative Noetherian
   rings $R$  whose annihilating-ideal graphs have finite genus $\gamma(\Bbb{AG}(R))$. It
   is shown that if $R$ is a Noetherian ring such that  $0<\gamma(\Bbb{AG}(R))<\infty$,
   then  $R$ has only finitely many ideals.

  {\footnotesize{\it\bf Key Words:} Commutative ring; Annihilating-ideal graph;
  Genus of a  graph.}\\
  {\footnotesize{\bf 2010  Mathematics Subject
  Classification:}   05C10; 13E10;  13H99; 16P60.}}
\end{abstract}

  \section{\large\bf Introduction}
  Throughout this paper, $R$
  denotes a commutative  ring with $1\neq 0$. Let $X$ be
 a subset of $R$. The {\it annihilator} of $X$
is the  ideal Ann$(X)=\{a\in R~:~aX=0\}$. We denote by $|Y|$ the cardinality of $Y$ and let
$Y^*=Y\setminus\{0\}$.  Let $V$ be a vector space over the field
$\Bbb{F}$. We use the notation ${{\rm v.dim}}_{\Bbb{F}}(V)$ to
denote the dimension of $V$ over the field $\Bbb{F}$.

   Let $S_{k}$ denote the sphere with $k$ handles where $k$ is a non-negative integer, that
is, $S_{k}$ is an oriented surface of genus $k$. The {\it genus}
of a graph $G$, denoted $\gamma(G)$, is the minimal integer $n$
such that the graph can be embedded in $S_{n}$. For details on the
notion of embedding a graph in a surface, see, e.g., \cite[Chapter
6]{white}. Intuitively, $G$ is embedded in a surface if it can be
drawn in the surface so that its edges intersect only at their
common vertices.  An
infinite graph $G$ is said to have infinite genus ($\gamma (G)=\infty$) if,
for every natural number $n$, there exists a finite subgraph
$G_{n}$ of $G$ such that $\gamma(G_{n})=n $. We note here that if
$H$ is a subgraph of a graph $G$, then $\gamma(H) \leq \gamma(G)$.
Let $K_n$ denote the complete graph on $n$ vertices; that is,
$K_n$ has vertex set $V$ with $|V|=n$ and
$a-b$ is an edge for every
distinct pair of vertices $a,b\in V$. Let $K_{m,n}$ denote the complete bipartite
graph; that is, $K_{m,n}$ has vertex set $V$ consisting of the
disjoint union of two subsets, $V_{1}$ and $V_2$, such that
$|V_{1}|= m$ and $|V_{2}|=n$, and
$a- b$ is an edge if and only if
$a \in V_{1}$ and $b\in V_{2}$. We may sometimes use
$K_{|V_{1}|,|V_{2}|}$  denote the complete bipartite graph with
vertex sets $V_{1}$ and $V_{2}$. Note that $K_{m,n} = K_{n,m}$. It
is well known that

${\rm (1.1)}~~~~~~~~~~~~~~~~~~~~~~~~~~~~~~~~~~~~~~~~~~~~~~~~~\gamma(K_{n})=\lceil \frac{(n-3)(n-4)}{12}\rceil ~~~{\rm for~ all}~ n \geq 3, ~ {\rm and} $\\

${\rm (1.2)}~~~~~~~~~~~~~~~~~~~~~~~~~~~~~~~~~~~~~~~~~~\gamma(K_{m,n})= \lceil \frac{(n-2)(m-2)}{4} \rceil ~~~{\rm for~all}~n \geq 2~ {\rm and}~ m \geq 2$\\
(see \cite{ring-you} and \cite{ring1}, respectively). For a graph
$G$, the degree of a vertex $\it{v}$ of $G$ is the number of edges
of $G$ incident with $\it{deg(v)}$.

Let $R$ be a ring.    We call an
 ideal $I$ of $R$ an {\it annihilating-ideal} if there exists a
 non-zero ideal $J$ of $R$ such that $IJ=(0)$.    We denote  by $\Bbb{A}(R)$ the set of
  all annihilating-ideals of $R$,   and  for an ideal $J$ of $R$, we denote by
 $\Bbb{I}(J)$ for the set $\{I~:~I~{\rm be~an~ideal~of}~R~{\rm and}~I\subseteq J\}$.
 Also,  by the {\it annihilating-ideal
   graph} $\Bbb{AG}(R)$ of $R$ we mean the graph with vertices
   $\Bbb{A}(R)^{*} = \Bbb{A}(R) \setminus \{(0)\}$ such that
   there is an (undirected) edge between vertices $I$ and $J$ if and
   only if $I\neq J$ and $IJ=(0)$. Thus, $\Bbb{AG}(R)$ is the empty
   graph if and only if $R$ is an integral domain. The notion of annihilating-ideal graph was first
introduced and systematically studied by Behboodi and Rakeei in
\cite{beh-rak1, beh-rak2}. Recently it has received a great deal
of attention from several authors, for instance,
\cite{aab,aan,AB2012} (see also \cite{vis}, in which the notion of
{\it``graph of zero-divisor ideals"} is investigated). Also, the
{\it zero-divisor graph} of $R$, denoted by $\Gamma(R)$,  is an
undirected graph with vertex set $Z(R)^*=Z(R)\setminus\{0\}$
such that two distinct
   vertices $x,y$ are adjacent if and only if $xy=0$ (where $Z(R)$ is the set of all zero divisors of $R$). The interplay between the ring theoretic properties of $R$ and the graph theoretic properties of $\Gamma(R)$,
begun in \cite{and-liv}. Several authors recently investigated the genus of a zero-divisor graph
   (for instance, see \cite{smi,wang,wick1,wick2}). In
   particular in \cite[Theorem 2]{wick2}, it was shown that for any positive integer $g$,
    there are finitely many finite commutative rings whose zero-divisor graphs have genus $g$. In \cite{AB2012}, the first and second authors investigated the genus of annihilating-ideal graphs. They showed that if $R$ is an Artinian ring with  $\gamma(\Bbb{AG}(R)) < \infty$, then either $R$ has only finitely many
ideals or $(R,\mathfrak{m})$ is a Gorenstein ring $(R,\mathfrak{m})$ with
${\rm v.dim}_{R/{\mathfrak{m}}}{\mathfrak{m}}/{\mathfrak{m}}^{2}\leq 2$.

    In this paper,  we continue the investigation of genus of annihilating-ideal graphs.  We first show that Noetherian rings $R$ (whose all nonzero proper ideals have nonzero annihilators) with $0<\gamma(\Bbb{AG}(R))<\infty$ have only finitely many ideals. Then we  characterize such Noetherian rings whose annihilating-ideal graphs have finite genus (including genus zero).

\section{\large\bf Preliminary}

We  list a few preliminary results which are needed to prove our main results.  The following
useful remark   will be used frequently in the sequel.

\begin{rrem}\label{2.2} {\rm It is well known that if $V$ is a vector space over an
infinite field $\Bbb{F}$, then $V$ can not be the union of
finitely many proper subspaces (see for example
\cite[p.283]{hal}).}
\end{rrem}

A local Artinian principal ideal ring is called a {\it special principal ideal
ring}
  and it has only finitely many ideals, each of which is a power of the maximal ideal.

\begin{llem}{\rm \cite[Lemma 2.3]{AB2012}}\label{2.3}
 Let $(R,\mathfrak{m})$ be a local ring with ${\mathfrak{m}}^t=(0)$. If for a positive integer
$n$, ${\it {\rm
v.dim}_{R/{\mathfrak{m}}}}({\mathfrak{m}}^{n}/{\mathfrak{m}}^{n+1})
=1$ and ${\mathfrak{m}}^{n}$ is a finitely generated $R$-module,
 then $\Bbb{I}({\mathfrak{m}}^{n})=\{{\mathfrak{m}}^i~:~n\leq i\leq t\}$. Moreover,
 if $n=1$, then $R$ is a  special principal ideal
ring.
 \end{llem}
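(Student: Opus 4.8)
The plan is to show that the hypotheses force $\mathfrak{m}^{n}$ and all of its higher powers to be principal, after which every ideal contained in $\mathfrak{m}^{n}$ can be pinned down one power at a time. First I would apply Nakayama's lemma: since $\mathfrak{m}^{n}$ is finitely generated and ${\rm v.dim}_{R/\mathfrak{m}}(\mathfrak{m}^{n}/\mathfrak{m}^{n+1})=1$, the ideal $\mathfrak{m}^{n}$ is generated by a single element, say $\mathfrak{m}^{n}=Rx$ with $x\notin\mathfrak{m}^{n+1}$ (here $\mathfrak{m}^{n}\neq\mathfrak{m}^{n+1}$, since otherwise the $1$-dimensionality fails). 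Consequently $\mathfrak{m}^{n+j}=\mathfrak{m}^{j}x$ for every $j\geq 0$, and in particular $\mathfrak{m}^{n+1}=\mathfrak{m}x$. Because $\mathfrak{m}^{t}=(0)$ the maximal ideal is nilpotent, so I may invoke Nakayama's lemma freely throughout (for a nilpotent ideal it needs no finiteness assumption): whenever a power $\mathfrak{m}^{i}\neq(0)$ satisfies ${\rm v.dim}_{R/\mathfrak{m}}(\mathfrak{m}^{i}/\mathfrak{m}^{i+1})\leq 1$, that power is principal.

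The heart of the matter, and the step I expect to be the main obstacle, is proving that $1$-dimensionality propagates upward, i.e. ${\rm v.dim}_{R/\mathfrak{m}}(\mathfrak{m}^{n+1}/\mathfrak{m}^{n+2})\leq 1$. My approach is to turn associativity of multiplication into a rank constraint. Identifying the $1$-dimensional space $\mathfrak{m}^{n}/\mathfrak{m}^{n+1}$ with $R/\mathfrak{m}$ via the basis $\bar{x}$, consider the $R/\mathfrak{m}$-bilinear pairing $B\colon(\mathfrak{m}/\mathfrak{m}^{2})\times(\mathfrak{m}^{n-1}/\mathfrak{m}^{n})\to R/\mathfrak{m}$ induced by $(\bar a,\bar y)\mapsto\overline{ay}$, together with the $R/\mathfrak{m}$-linear map $\phi\colon\mathfrak{m}/\mathfrak{m}^{2}\to\mathfrak{m}^{n+1}/\mathfrak{m}^{n+2}$ induced by $\bar c\mapsto\overline{cx}$. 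Since $\mathfrak{m}^{n}=\mathfrak{m}\cdot\mathfrak{m}^{n-1}$, the products $\overline{ay}$ span $\mathfrak{m}^{n}/\mathfrak{m}^{n+1}$, so $B\not\equiv 0$; and since $\mathfrak{m}^{n+1}=\mathfrak{m}x$, the map $\phi$ is surjective. Computing the product $acy$ (with $a,c\in\mathfrak{m}$ and $y\in\mathfrak{m}^{n-1}$) in the two groupings $(ay)c$ and $a(cy)$, and reducing modulo $\mathfrak{m}^{n+2}$ using $\mathfrak{m}^{n+1}\mathfrak{m}\subseteq\mathfrak{m}^{n+2}$, yields the identity $B(\bar a,\bar y)\,\phi(\bar c)=B(\bar c,\bar y)\,\phi(\bar a)$ in $\mathfrak{m}^{n+1}/\mathfrak{m}^{n+2}$. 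Now fix $\bar y$ for which $L(\bar a):=B(\bar a,\bar y)$ is not identically zero (possible as $B\not\equiv 0$) and choose $\bar a_{0}$ with $L(\bar a_{0})=1$; the identity then gives $\phi(\bar c)=L(\bar c)\,\phi(\bar a_{0})$ for all $\bar c$, so the image of $\phi$ lies in the line spanned by $\phi(\bar a_{0})$. As $\phi$ is onto, this forces ${\rm v.dim}_{R/\mathfrak{m}}(\mathfrak{m}^{n+1}/\mathfrak{m}^{n+2})\leq 1$. Repeating this argument with $x$ replaced by a generator of $\mathfrak{m}^{n+j}$ (which, when nonzero, is principal by the previous paragraph), I obtain by induction on $j$ that $\mathfrak{m}^{n+j}$ is principal and ${\rm v.dim}_{R/\mathfrak{m}}(\mathfrak{m}^{n+j}/\mathfrak{m}^{n+j+1})\leq 1$ for all $j\geq 0$.

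Finally I would read off the ideals. Given a nonzero ideal $I\subseteq\mathfrak{m}^{n}$, nilpotency provides a largest $i$, with $n\leq i<t$, such that $I\subseteq\mathfrak{m}^{i}$ but $I\not\subseteq\mathfrak{m}^{i+1}$. Picking $u\in I\setminus\mathfrak{m}^{i+1}$, the class $\bar u$ is a nonzero element of the (hence exactly) $1$-dimensional space $\mathfrak{m}^{i}/\mathfrak{m}^{i+1}$, so $\mathfrak{m}^{i}=Ru+\mathfrak{m}^{i+1}$, and Nakayama for the nilpotent $\mathfrak{m}$ gives $\mathfrak{m}^{i}=Ru\subseteq I\subseteq\mathfrak{m}^{i}$, whence $I=\mathfrak{m}^{i}$. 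Together with the case $I=(0)=\mathfrak{m}^{t}$, this establishes $\Bbb{I}(\mathfrak{m}^{n})=\{\mathfrak{m}^{i}:n\leq i\leq t\}$. For the moreover part, when $n=1$ we have $\mathfrak{m}=Rx$ principal and $\mathfrak{m}^{t}=(0)$, so the above shows every ideal of $R$ is one of $R,\mathfrak{m},\dots,\mathfrak{m}^{t}=(0)$; thus $R$ is a local principal ideal ring with only finitely many ideals, that is, a special principal ideal ring.
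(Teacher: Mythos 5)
Your proof is correct. Note, however, that the paper itself gives no proof of this statement: Lemma~\ref{2.3} is imported verbatim from \cite[Lemma 2.3]{AB2012}, so there is no internal argument to compare yours against. Taken on its own terms, your write-up is a sound, self-contained proof. The routine parts are fine: Nakayama (in its nilpotent-ideal form, which indeed needs no finiteness hypothesis) gives $\mathfrak{m}^{n}=Rx$, hence $\mathfrak{m}^{n+j}=\mathfrak{m}^{j}x$, and the final descent showing that any nonzero $I\subseteq\mathfrak{m}^{n}$ equals $\mathfrak{m}^{i}$ for the largest $i$ with $I\subseteq\mathfrak{m}^{i}$ is exactly right. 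The genuinely non-obvious step --- and the one you handle well --- is that ${\rm v.dim}_{R/\mathfrak{m}}(\mathfrak{m}^{n+1}/\mathfrak{m}^{n+2})\leq 1$ does not follow merely from surjectivity of $\bar c\mapsto\overline{cx}$ (that only bounds it by ${\rm v.dim}\,\mathfrak{m}/\mathfrak{m}^{2}$, and $\mathfrak{m}$ is not assumed finitely generated or principal). Your associativity identity $B(\bar a,\bar y)\phi(\bar c)=B(\bar c,\bar y)\phi(\bar a)$, together with the observation that $B\not\equiv 0$ because $\mathfrak{m}\cdot\mathfrak{m}^{n-1}=\mathfrak{m}^{n}\nsubseteq\mathfrak{m}^{n+1}$, pins the image of $\phi$ to a single line and closes this gap; the inductive repetition at each level $n+j$ then goes through because each nonzero $\mathfrak{m}^{n+j}$ is again principal by the same Nakayama argument. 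The ``moreover'' clause for $n=1$ is also handled correctly: every ideal is a power of $\mathfrak{m}=Rx$, so $R$ has finitely many ideals, is Artinian, and is a special principal ideal ring.
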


\begin{llem}\label{n.1}
Let $(R,\mathfrak{m})$ be a local Artinian ring. If $I\in \Bbb{I}({\mathfrak{m}}^{n-1})\setminus \Bbb{I}({\mathfrak{m}}^{n})$ for some positive integer $n$, is a nonzero principal ideal, then $|\Bbb{I}(I)|=|\Bbb{I}(I\cap {\mathfrak{m}}^{n})|+1$.
\end{llem}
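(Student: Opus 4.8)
The plan is to prove the set-theoretic identity $\Bbb{I}(I)=\Bbb{I}(I\cap\mathfrak{m}^n)\cup\{I\}$ with $I\notin\Bbb{I}(I\cap\mathfrak{m}^n)$, so that the two $\Bbb{I}$-sets differ by exactly one element and the asserted cardinality equation follows at once. First I would record the easy containment: since $I\cap\mathfrak{m}^n\subseteq I$, every ideal contained in $I\cap\mathfrak{m}^n$ is contained in $I$, giving $\Bbb{I}(I\cap\mathfrak{m}^n)\subseteq\Bbb{I}(I)$. Moreover, the hypothesis $I\in\Bbb{I}(\mathfrak{m}^{n-1})\setminus\Bbb{I}(\mathfrak{m}^n)$ says precisely that $I\subseteq\mathfrak{m}^{n-1}$ but $I\not\subseteq\mathfrak{m}^n$, hence $I\not\subseteq I\cap\mathfrak{m}^n$, so $I$ itself belongs to $\Bbb{I}(I)$ but not to $\Bbb{I}(I\cap\mathfrak{m}^n)$.

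The crux is to show that $I$ is the \emph{only} extra ideal, i.e., that any ideal $J$ with $J\subseteq I$ and $J\not\subseteq\mathfrak{m}^n$ must equal $I$. Here I would exploit that $I$ is principal: write $I=(a)$, where necessarily $a\in\mathfrak{m}^{n-1}\setminus\mathfrak{m}^n$ because $I\subseteq\mathfrak{m}^{n-1}$ and $I\not\subseteq\mathfrak{m}^n$. Choose $x\in J\setminus\mathfrak{m}^n$. Since $x\in J\subseteq(a)$, we may write $x=ra$ for some $r\in R$. If $r$ were a non-unit, then $r\in\mathfrak{m}$ (as $R$ is local), and so $x=ra\in\mathfrak{m}\cdot\mathfrak{m}^{n-1}=\mathfrak{m}^n$, contradicting the choice of $x$. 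Therefore $r$ is a unit, $a=r^{-1}x\in J$, and hence $I=(a)\subseteq J\subseteq I$, forcing $J=I$.

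This dichotomy says that every $J\in\Bbb{I}(I)$ either satisfies $J\subseteq\mathfrak{m}^n$ (equivalently $J\subseteq I\cap\mathfrak{m}^n$, i.e. $J\in\Bbb{I}(I\cap\mathfrak{m}^n)$) or else $J=I$. Combined with the first paragraph, this yields the disjoint decomposition $\Bbb{I}(I)=\Bbb{I}(I\cap\mathfrak{m}^n)\cup\{I\}$, and counting gives $|\Bbb{I}(I)|=|\Bbb{I}(I\cap\mathfrak{m}^n)|+1$. The only real subtlety---and the single place the hypotheses enter---is the passage from the existence of an element of $J$ lying outside $\mathfrak{m}^n$ to the conclusion $a\in J$; this is exactly where the principality of $I$ together with the locality of $R$ (which forces the coefficient $r$ to be a unit) are essential. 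The Artinian assumption itself plays no role in this particular counting identity.
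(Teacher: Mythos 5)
Your proof is correct and follows essentially the same route as the paper's: both identify a generator $a$ of $I$ lying in $\mathfrak{m}^{n-1}\setminus\mathfrak{m}^{n}$, express an element of a subideal $J$ as $ra$, and use locality to force either $r\in\mathfrak{m}$ (so $J\subseteq\mathfrak{m}^{n}$) or $r$ a unit (so $J=I$). Your observation that the Artinian hypothesis is not needed here is also accurate.
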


\begin{proof}
Since $I\in \Bbb{I}({\mathfrak{m}}^{n-1})\setminus \Bbb{I}({\mathfrak{m}}^{n})$ is a nonzero principal ideal, there exists  $x\in {\mathfrak{m}}^{n-1}\setminus {\mathfrak{m}}^{n}$ such that $I=Rx$. Let $J\in \Bbb{I}(Rx)$ such that $J\neq I$ and let $y\in J$. Thus $y=rx$ for some $r\in R$. If $r\not \in \mathfrak{m}$, then $r$ is an invertible element and so $Ry=Rx$, yielding a contradiction. Thus we have $r\in \mathfrak{m}$, so $y=rx\in \mathfrak{m}^{n}$. Therefore, $J \in \Bbb{I}(\mathfrak{m}^{n})$. Hence $|\Bbb{I}(I)|=|\Bbb{I}(I\cap {\mathfrak{m}}^{n})|+1$. \hfill $\square$
\end{proof}

\begin{llem}\label{n.2}
Let $(R,\mathfrak{m})$ be a Gorenstein ring. If $I$ is a principal ideal such that $|\Bbb{I}(I)|=3$, then $\mathfrak{m}^{2}\subseteq {\rm Ann(I)}$.
\end{llem}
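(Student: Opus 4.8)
The plan is to exploit Nakayama's lemma twice, using the strong constraint that $|\Bbb{I}(I)|=3$ places on the submodule structure of the cyclic ideal $I$. Since $|\Bbb{I}(I)|=3$, the ideal $I$ is nonzero and there is exactly one ideal $K$ with $(0)\subsetneq K\subsetneq I$; thus $\Bbb{I}(I)=\{(0),K,I\}$. Writing $I=Rx$, my first step is to locate $\mathfrak{m}I$ inside this short chain.

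Because $I=Rx$ is finitely generated and $R$ is local Noetherian, Nakayama's lemma gives $\mathfrak{m}I\subsetneq I$, so $\mathfrak{m}I$ must be one of $(0)$ and $K$. If $\mathfrak{m}I=(0)$, then $\mathfrak{m}\subseteq{\rm Ann}(x)$, forcing $\mathrm{Ann}(x)=\mathfrak{m}$ and hence $I\cong R/\mathfrak{m}$ to be a simple module; but then $\Bbb{I}(I)=\{(0),I\}$, contradicting $|\Bbb{I}(I)|=3$. Therefore $\mathfrak{m}I=K$.

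Next I would show that $K$ is a minimal nonzero ideal. Any ideal $J$ with $(0)\subsetneq J\subsetneq K$ would lie in $\Bbb{I}(I)=\{(0),K,I\}$ and be distinct from both $(0)$ and $I$ (as $J\subsetneq K\subsetneq I$), hence equal to $K$, which is impossible. Thus $\Bbb{I}(K)=\{(0),K\}$ and $K$ is simple. Applying Nakayama's lemma once more to the nonzero finitely generated ideal $K$ gives $\mathfrak{m}K\subsetneq K$, and since $K$ is simple this forces $\mathfrak{m}K=(0)$. Finally $\mathfrak{m}^{2}I=\mathfrak{m}(\mathfrak{m}I)=\mathfrak{m}K=(0)$, i.e. $\mathfrak{m}^{2}\subseteq{\rm Ann}(I)$, as required.

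Since the argument is essentially a double application of Nakayama, I do not anticipate a serious obstacle; the only point requiring care is the bookkeeping that pins down $\mathfrak{m}I=K$ and the simplicity of $K$ purely from $|\Bbb{I}(I)|=3$. I observe that the proof uses only that $R$ is local and Noetherian (so that $I$ and $K$ are finitely generated), with the Gorenstein hypothesis entering solely through these properties; no finer feature of Gorenstein rings, such as the self-duality $\mathrm{Ann}(\mathrm{Ann}(\cdot))=\mathrm{id}$ on ideals, appears to be needed for this particular statement.
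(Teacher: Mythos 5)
Your proof is correct and is essentially the paper's argument transported along the isomorphism $Rx\cong R/\mathrm{Ann}(x)$: the paper observes that $\mathfrak{m}/\mathrm{Ann}(x)$ is the unique nonzero proper ideal of $R/\mathrm{Ann}(x)$ and applies Nakayama to rule out $\mathrm{Ann}(x)+\mathfrak{m}^{2}=\mathfrak{m}$, which is precisely your step $\mathfrak{m}K\subsetneq K$ viewed upstairs. Your closing remark that only locality and finite generation are used (not any deeper Gorenstein property) is likewise consistent with the paper's proof.
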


\begin{proof}
Since $I$ is a principal ideal, $I=Rx$ for some $x\in R$. Since $Rx\cong R/{\rm Ann}(x)$ and $Rx$ has only one
 nonzero proper $R$-submodule,  ${\mathfrak{m}}/{\rm{Ann}}(x)$ is the only  nonzero proper ideal of $R/{\rm Ann}(x)$.
If ${\mathfrak{m}}^{2}\nsubseteq {\rm Ann}(x)$, then ${\rm
Ann}(x)+{\mathfrak{m}}^{2}={\mathfrak{m}}$, and  by Nakayama's
lemma (see \cite[(4.22)]{La91}), ${\rm Ann}(x)={\mathfrak{m}}$, yielding a contradiction. Thus
${\mathfrak{m}}^{2}\subseteq {\rm Ann}(x)={\rm Ann}(I)$.\hfill $\square$
\end{proof}

\begin{llem}\label{2.6}
Let $(R,\mathfrak{m})$ be an Artinian Gorenstein ring with $|R/\mathfrak{m}|=\infty$ such that $\mathfrak{m}^{t+1}=(0) (t\geq 5)$ ,\\ ${\rm v.dim}_{R/{\mathfrak{m}}}{\mathfrak{m}}^{t-s}/{\mathfrak{m}}^{t-(s-1)}\leq2$ and ${\rm v.dim}_{R/{\mathfrak{m}}}{\mathfrak{m}}^{t-(s-1)}/{\mathfrak{m}}^{t-(s-2)}= 1$ (where $s=t-1$ or $t-2$). If $\gamma(\Bbb{AG}(R))<\infty$, then ${\rm v.dim}_{R/{\mathfrak{m}}}{\mathfrak{m}}^{t-s}/{\mathfrak{m}}^{t-(s-1)}=1$.
\end{llem}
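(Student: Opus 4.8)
The plan is to argue by contradiction. Write $j:=t-s$, so $j\in\{1,2\}$, and the hypotheses become $\dim_{R/\mathfrak{m}}(\mathfrak{m}^{j}/\mathfrak{m}^{j+1})\leq 2$ and $\dim_{R/\mathfrak{m}}(\mathfrak{m}^{j+1}/\mathfrak{m}^{j+2})=1$, while the goal is $\dim_{R/\mathfrak{m}}(\mathfrak{m}^{j}/\mathfrak{m}^{j+1})=1$. Since $\mathfrak{m}^{j}\supsetneq\mathfrak{m}^{j+1}$ (Nakayama, as $j\le t$), this dimension is at least $1$, so if it is not $1$ it equals $2$; assume $\dim_{R/\mathfrak{m}}(\mathfrak{m}^{j}/\mathfrak{m}^{j+1})=2$. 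From this I will embed a complete bipartite graph $K_{t-j,\infty}$ into $\Bbb{AG}(R)$ and use (1.2) to force $\gamma(\Bbb{AG}(R))=\infty$, a contradiction. The key tool is Gorenstein (Matlis) duality for the Artinian local ring $R$: the socle $(0:\mathfrak{m})$ is one–dimensional and $\ell({\rm Ann}(I))=\ell(R)-\ell(I)$ for every ideal $I$.

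For the infinite side I would compare the two annihilators $(0:\mathfrak{m}^{j})\subseteq(0:\mathfrak{m}^{j+1})$. By the length formula, the quotient $(0:\mathfrak{m}^{j+1})/(0:\mathfrak{m}^{j})$ has length $\ell(\mathfrak{m}^{j})-\ell(\mathfrak{m}^{j+1})=\dim_{R/\mathfrak{m}}(\mathfrak{m}^{j}/\mathfrak{m}^{j+1})=2$. It is annihilated by $\mathfrak{m}$, since $\mathfrak{m}\,(0:\mathfrak{m}^{j+1})\subseteq(0:\mathfrak{m}^{j})$, so it is a $2$-dimensional vector space over $R/\mathfrak{m}$; as $|R/\mathfrak{m}|=\infty$ (Remark~\ref{2.2}) it has infinitely many distinct one–dimensional subspaces, which lift to infinitely many distinct ideals $J_{\lambda}$ with $(0:\mathfrak{m}^{j})\subsetneq J_{\lambda}\subseteq(0:\mathfrak{m}^{j+1})$, each satisfying $\mathfrak{m}^{j+1}J_{\lambda}=(0)$. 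For the bounded side I take the power chain $\mathfrak{m}^{j+1}\supsetneq\mathfrak{m}^{j+2}\supsetneq\cdots\supsetneq\mathfrak{m}^{t}\supsetneq(0)$, which (Nakayama) consists of $t-j$ distinct nonzero proper ideals. For every $k$ with $j+1\le k\le t$ one has $\mathfrak{m}^{k}J_{\lambda}\subseteq\mathfrak{m}^{j+1}J_{\lambda}=(0)$, so $\mathfrak{m}^{k}$ is adjacent to every $J_{\lambda}$. Discarding the finitely many $J_{\lambda}$ that happen to coincide with a power of $\mathfrak{m}$ leaves infinitely many, so $\{\mathfrak{m}^{j+1},\dots,\mathfrak{m}^{t}\}$ together with the remaining $J_{\lambda}$ realize $K_{t-j,\infty}$ as a subgraph of $\Bbb{AG}(R)$. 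Because $t\ge 5$ and $j\le 2$ we have $t-j\ge 3$, so $K_{3,m}$ is a subgraph for every $m$; by (1.2), $\gamma(K_{3,m})=\lceil (m-2)/4\rceil\to\infty$, forcing $\gamma(\Bbb{AG}(R))=\infty$ and contradicting the hypothesis. Hence $\dim_{R/\mathfrak{m}}(\mathfrak{m}^{j}/\mathfrak{m}^{j+1})=1$.

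The main obstacle is choosing the correct infinite family of vertices, and this is the only genuinely nonobvious point. The naive candidate is the infinitely many ideals lying between $\mathfrak{m}^{j+1}$ and $\mathfrak{m}^{j}$ coming directly from the $2$-dimensional layer $\mathfrak{m}^{j}/\mathfrak{m}^{j+1}$; but these sit too high in the ring and need not annihilate one another (their pairwise products lie only in $\mathfrak{m}^{2j}$, which is nonzero since $2j\le 4<t$), so they carry no guaranteed edges and cannot build a large complete bipartite subgraph. The device that makes the argument work is to transport this $2$-dimensional layer to the bottom of the ring via Gorenstein duality, obtaining instead the infinitely many ideals $J_\lambda$ just above the socle that are \emph{uniformly} killed by $\mathfrak{m}^{j+1}$, hence adjacent to the whole tail $\mathfrak{m}^{j+1},\dots,\mathfrak{m}^{t}$ of the power chain; the hypothesis $t\ge 5$ is exactly what guarantees that this tail has at least three terms in the harder case $j=2$, which is precisely the size the genus bound (1.2) requires. (Note the construction uses only $d_j=2$ together with $t\ge 5$ and the Gorenstein hypothesis, so Lemma~\ref{2.3} is not even needed here.)
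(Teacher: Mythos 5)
Your argument is correct, but it is a genuinely different proof from the one in the paper. The paper stays elementary: writing $k=t-s$, it iteratively builds pairs of elements $x_{2i-1},x_{2i}\in\mathfrak{m}^{k}\setminus\mathfrak{m}^{k+1}$ with $x_{2i}\in{\rm Ann}(x_{2i-1})$, uses Lemmas~\ref{n.1} and~\ref{n.2} to pin down $|\Bbb{I}(Rx)|$ and force $\mathfrak{m}^{t-2}\subseteq{\rm Ann}(x_{2i})$, uses Lemma~\ref{2.3} (this is where the hypothesis ${\rm v.dim}\,\mathfrak{m}^{k+1}/\mathfrak{m}^{k+2}=1$ enters, via $|\Bbb{I}(\mathfrak{m}^{k+1})|<\infty$) to keep producing new $x_{2i}$, and concludes $|\Bbb{I}({\rm Ann}(\mathfrak{m}^{t-2}))|=\infty$, yielding a $K_{\infty,3}$ against $\{\mathfrak{m}^{t},\mathfrak{m}^{t-1},\mathfrak{m}^{t-2}\}$. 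You instead invoke Matlis duality for the Artinian Gorenstein local ring --- the length identity $\ell((0:I))=\ell(R)-\ell(I)$ --- to see in one stroke that $(0:\mathfrak{m}^{j+1})/(0:\mathfrak{m}^{j})$ is a $2$-dimensional $R/\mathfrak{m}$-space, whose infinitely many lines lift to infinitely many ideals uniformly killed by $\mathfrak{m}^{j+1},\dots,\mathfrak{m}^{t}$ (a tail of length $t-j\geq 3$ precisely because $t\geq 5$); the endgame via Formula~(1.2) is the same $K_{3,\infty}$ contradiction. All the small checks you need (the quotient is killed by $\mathfrak{m}$, the lifted subspaces are ideals, only finitely many coincide with powers of $\mathfrak{m}$, the powers are distinct by Nakayama) go through. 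What your route buys is brevity and a strictly stronger statement --- you never use the hypothesis ${\rm v.dim}\,\mathfrak{m}^{t-(s-1)}/\mathfrak{m}^{t-(s-2)}=1$, nor Lemmas~\ref{n.1}, \ref{n.2}, \ref{2.3} --- at the cost of importing the duality $\ell({\rm Ann}(I))=\ell(R)-\ell(I)$ for Artinian Gorenstein local rings, a standard fact (Matlis duality, $R\cong E(R/\mathfrak{m})$) that the paper nowhere uses and that you should cite explicitly rather than assert; the paper's longer element-by-element construction avoids any such machinery.
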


\begin{pproof} Let $k=t-s$. Suppose on the  contrary that ${\rm v.dim}_{R/{\mathfrak{m}}}{\mathfrak{m}}^{k}/{\mathfrak{m}}^{k+1}= 2$. Let $x_1\in {\mathfrak{m}}^{k}\setminus {\mathfrak{m}}^{k+1}$. Assume that ${\mathfrak{m}}^{t-2}\nsubseteq Rx_1$. By Lemma~\ref{n.1}, $|\Bbb{I}(Rx_1)|=|\Bbb{I}({\mathfrak{m}}^{k+1}\cap Rx_1)|+1$. Since ${\mathfrak{m}}^{t-2}\nsubseteq Rx_1$, either ${\mathfrak{m}}^{k+1}\cap Rx_1={\mathfrak{m}}^{t}$ or ${\mathfrak{m}}^{k+1}\cap Rx_1={\mathfrak{m}}^{t-1}$. We conclude that  $|\Bbb{I}(Rx_1)|$ is either $3$ or $4$. If $|\Bbb{I}(Rx_1)|=3$, then by Lemma~\ref{n.2},
${\mathfrak{m}}^{2}\subseteq {\rm Ann}(x_1)$, and so $\mathfrak{m}^{t-2}\subseteq {\rm Ann}(x_1)$.

Assume that $|\Bbb{I}(Rx_1)|=4$.  If $\mathfrak{m}^3\subseteq {\rm Ann}(x_{1})$, then $\mathfrak{m}^{t-2}\subseteq {\rm Ann}(x_{1})$. Suppose that $\mathfrak{m}^3\nsubseteq {\rm Ann}(x_{1})$. Since $Rx_1\cong R/{\rm Ann}(x_1)$ and $Rx_1$ has only two nonzero proper ideals, ${\mathfrak{m}}$ and ${\rm Ann}(x_1)+{\mathfrak{m}}^{2}$ are the only nonzero proper ideals of $R/{\rm Ann}(x_1)$. Therefore, ${\rm Ann}(x_1)+{\mathfrak{m}}^{2}={\rm Ann}(x_1)+{\mathfrak{m}}^{3}$, and so $({\rm Ann}(x_1)+{\mathfrak{m}}^{2}){\mathfrak{m}}^{t-4}=({\rm Ann}(x_1)+{\mathfrak{m}}^{3}){\mathfrak{m}}^{t-4}$. Thus
${\rm Ann}(x_1)\mathfrak{m}^{t-4}+{\mathfrak{m}}^{t-2}={\rm Ann}(x_1){\mathfrak{m}}^{t-4}+{\mathfrak{m}}^{t-1}\subseteq {\rm Ann}(x_1)$. Therefore, ${\mathfrak{m}}^{t-2}\subseteq {\rm Ann}(x_1)$. So, in both cases, ${\mathfrak{m}}^{t-2}\subseteq {\rm Ann}(x_1)$. Thus if ${\mathfrak{m}}^{t-2}\nsubseteq  Rx_1$, then ${\mathfrak{m}}^{t-2}\subseteq {\rm Ann}(x_1)$.

Assume that ${\mathfrak{m}}^{t-2}\subseteq Rx_1$. Note that ${\mathfrak{m}}^{k}x_1\cong {\mathfrak{m}}^{k}/(Ann(x_1)\cap {\mathfrak{m}}^{k})$. Since ${\mathfrak{m}}^{k}x_1\subseteq {\mathfrak{m}}^{k+1}$ and by Lemma \ref{2.3} $|\Bbb{I}({\mathfrak{m}}^{k+1})|<\infty$, we have ${\rm v.dim}_{R/\mathfrak{m}} (({\rm Ann}(x_1)\cap {\mathfrak{m}}^{k})+{\mathfrak{m}}^{k+1})/{\mathfrak{m}}^{k+1}=1$, so ${\rm Ann}(x_1)\cap ({\mathfrak{m}}^{k}\setminus {\mathfrak{m}}^{k+1})\neq (0)$. Let $x_2\in {\rm Ann}(x_1)\cap ({\mathfrak{m}}^{k}\setminus {\mathfrak{m}}^{k+1})$. Then ${\mathfrak{m}}^{t-2}\subseteq {\rm Ann}(x_2)$ since ${\mathfrak{m}}^{t-2}\subseteq Rx_1$. Therefore, ${\mathfrak{m}}^{t-2}\subseteq {\rm Ann}(x_1)$ or ${\mathfrak{m}}^{t-2}\subseteq {\rm Ann}(x_2)$.\\

 Let $x_{2i-1}\in {\mathfrak{m}}^{k}\setminus {\mathfrak{m}}^{k+1}$ $(i\geq 2)$ such that $\{x_{2i-1}+{\mathfrak{m}}^{k+1}, x_2+{\mathfrak{m}}^{k+1}\}$ and  $\{x_{2i-1}+{\mathfrak{m}}^{k+1}, x_{2j-1}+{\mathfrak{m}}^{k+1}\}$ (for each $j=1, 2,\cdots, i-1)$ are  bases for ${\mathfrak{m}}^{k}/{\mathfrak{m}}^{k+1}$.

 We claim that ${\mathfrak{m}}^{t-2}\subseteq Rx_{2i-1}$. Suppose on the contrary that ${\mathfrak{m}}^{t-2}\nsubseteq Rx_{2i-1}$. Therefore, by Lemma~\ref{n.1}, $|\Bbb{I}(Rx_{2i-1})|=|\Bbb{I}({\mathfrak{m}}^{k+1}\cap Rx_{2i-1})|+1$. Since ${\mathfrak{m}}^{t-2}\nsubseteq Rx_{2i-1}$, ${\mathfrak{m}}^{k+1}\cap Rx_{2i-1}={\mathfrak{m}}^{t}$ or ${\mathfrak{m}}^{k+1}\cap Rx_{2i-1}={\mathfrak{m}}^{t-1}$. We conclude that $|\Bbb{I}(Rx_{2i-1})|$ is  either $3$ or $4$. If $|\Bbb{I}(Rx_{2i-1})|=3$, then by Lemma~\ref{n.2},
${\mathfrak{m}}^{2}\subseteq {\rm Ann}(x_{2i-1})$, and so ${\mathfrak{m}}^{t-2}\subseteq {\rm Ann}(x_{2i-1})$.

 Assume that $|\Bbb{I}(Rx_{2i-1})|=4$. If $\mathfrak{m}^3\subseteq {\rm Ann}(x_{2i-1})$, then $\mathfrak{m}^{t-2}\subseteq {\rm Ann}(x_{2i-1})$. Suppose that $\mathfrak{m}^3\nsubseteq {\rm Ann}(x_{2i-1})$. Since $Rx_{2i-1}\cong R/{\rm Ann}(x_{2i-1})$ and $Rx_{2i-1}$ has only two nonzero proper ideals, ${\mathfrak{m}}$ and ${\rm Ann}(x_{2i-1})+{\mathfrak{m}}^{2}$ are the only nonzero proper ideals of $R/{\rm Ann}(x_{2i-1})$. Therefore, ${\rm Ann}(x_{2i-1})+{\mathfrak{m}}^{2}={\rm Ann}(x_{2i-1})+{\mathfrak{m}}^{3}$, and so  $({\rm Ann}(x_{2i-1})+{\mathfrak{m}}^{2}){\mathfrak{m}}^{t-4}=({\rm Ann}(x_{2i-1})+{\mathfrak{m}}^{3}){\mathfrak{m}}^{t-4}$. Thus ${\rm Ann}(x_{2i-1})\mathfrak{m}^{t-4}+{\mathfrak{m}}^{t-2}={\rm Ann}(x_{2i-1}){\mathfrak{m}}^{t-4}+{\mathfrak{m}}^{t-1}\subseteq {\rm Ann}(x_{2i-1})$. Therefore, ${\mathfrak{m}}^{t-2}\subseteq {\rm Ann}(x_{2i-1})$.

Since either ${\mathfrak{m}}^{t-2}\subseteq {\rm Ann}(x_1)$ or ${\mathfrak{m}}^{t-2}\subseteq {\rm Ann}(x_2)$, we conclude that either $(0)={\mathfrak{m}}^{t-2} (Rx_1+Rx_{2i-1})={\mathfrak{m}}^{t-2}{\mathfrak{m}}^{2}={\mathfrak{m}}^{t}\not=(0)$ or $(0)={\mathfrak{m}}^{t-2} (Rx_2+Rx_{2i-1})={\mathfrak{m}}^{t-2}{\mathfrak{m}}^{2}={\mathfrak{m}}^{t}\not=(0)$, yielding a contradiction.
 Thus ${\mathfrak{m}}^{t-2}\subseteq Rx_{2i-1}$.

Since ${\mathfrak{m}}^{2}x_{2i-1}\subseteq {\mathfrak{m}}^{k+1}$ and $|\Bbb{I}({\mathfrak{m}}^{k+1})|<\infty$, $|\Bbb{I}({\mathfrak{m}}^{k}x_{2i-1})|<\infty$. Note that ${\mathfrak{m}}^{k}x_{2i-1}\cong {\mathfrak{m}}^{k}/(\Ann(x_{2i-1})\cap {\mathfrak{m}}^{k})$. Thus there are finitely many ideals between  ${\rm Ann}(x_{2i-1})\cap {\mathfrak{m}}^{k}$ and ${\mathfrak{m}}^{k}$. So ${\rm v.dim}_{R/{\mathfrak{m}}} ({\rm Ann}(x_{2i-1})\cap {\mathfrak{m}}^{k}+{\mathfrak{m}}^{k+1})/{\mathfrak{m}}^{k+1}=1$, hence ${\rm Ann}(x_{2i-1})\cap ({\mathfrak{m}}^{k}\setminus {\mathfrak{m}}^{k+1})\neq (0)$. \\

Therefore, we can find $x_{2i}\in {\rm Ann}(x_{2i-1})\cap ({\mathfrak{m}}^{k}\setminus {\mathfrak{m}}^{k+1})$ and so ${\mathfrak{m}}^{t-2}\subseteq {\rm Ann}(x_{2i})$. If $Rx_{2i}=Rx_{2j}$ for some $j=1,2,\cdots, i-1$, then $Rx_{2i}(Rx_{2i-1}+Rx_{2j-1})=0$,  so $Rx_{2i}{\mathfrak{m}}^{k}=0$. Since ${\mathfrak{m}}^{t}{\mathfrak{m}}^{k}={\mathfrak{m}}^{t-1}{\mathfrak{m}}^{k}=(Rx_{2i-1}){\mathfrak{m}}^{k}=(0)$, $K_{|\Bbb{I}({\mathfrak{m}}^{k})|,3}$ is a subgraph of $\Bbb{AG}(R)$, so by Formula~(1.2) $\gamma(\Gamma(R))=\infty$, yielding a contradiction. Since $Rx_{2i}\in \Bbb{I}({\rm Ann}({\mathfrak{m}}^{t-2}))$ for each $i$, we conclude that $|\Bbb{I}({\rm Ann}({\mathfrak{m}}^{t-2}))|=\infty$.  Since ${\mathfrak{m}}^{t}{\rm Ann}({\mathfrak{m}}^{t-2})={\mathfrak{m}}^{t-1}{\rm Ann}({\mathfrak{m}}^{t-2})={\mathfrak{m}}^{t-2}{\rm Ann}({\mathfrak{m}}^{t-2})=(0)$, $K_{|\Bbb{I}({\rm Ann}({\mathfrak{m}}^{t-2}))|,3}$ is a subgraph of $\Bbb{AG}(R)$,  so $\gamma(\Bbb{AG}(R))=\infty$, yielding a contradiction. Therefore,
 ${\rm v.dim}_{R/{\mathfrak{m}}}{\mathfrak{m}}^{k}/{\mathfrak{m}}^{k+1}= 1$.\hfill $\square$

\end{pproof}

\section{\large\bf Main Results}

We first show that local Noetherian rings whose annihilating-ideal graphs have positive genus are Artinian. Next we show that Artinian rings whose annihilating-ideal graphs have positive genus have finitely many ideals [Theorem \ref{2.7}]. Finally, we characterize Noetherian rings whose all nonzero proper ideals have nonzero annihilators and annihilating-ideal graphs have finite genus (including genus zero) [Theorem \ref{last}].

\begin{ppro}\label{nlocal}
Let $R$ be a local  ring with $0<\gamma(\Bbb{AG}(R))<\infty$.  Then $R$ is Artinian if and only if  $R$ is Noetherian.
\end{ppro}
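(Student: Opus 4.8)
The implication ``$R$ Artinian $\Rightarrow$ $R$ Noetherian'' is the classical fact that a commutative Artinian ring is Noetherian, so only the converse needs the hypothesis $0<\gamma(\Bbb{AG}(R))<\infty$. The plan is to assume $(R,\mathfrak{m})$ is Noetherian local and, for a contradiction, that $R$ is \emph{not} Artinian, and then to exhibit a forbidden subgraph. For a Noetherian local ring, being Artinian is equivalent to $\dim R=0$, i.e. to $\mathfrak{m}^{t}=(0)$ for some $t$; hence the assumption yields a strictly decreasing chain $\mathfrak{m}\supsetneq\mathfrak{m}^{2}\supsetneq\cdots$ with every $\mathfrak{m}^{n}\neq(0)$ (a stabilization $\mathfrak{m}^{n}=\mathfrak{m}^{n+1}$ would force $\mathfrak{m}^{n}=(0)$ by Nakayama), so in particular $R$ has infinitely many ideals and $\dim R\geq 1$. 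Moreover, since $\gamma(\Bbb{AG}(R))>0$ the graph is nonempty, so $R$ is not a domain and therefore has a nonzero associated prime $\mathfrak{p}=\Ann(x)$ with $x\neq 0$.

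The contradiction will come from producing, inside $\Bbb{AG}(R)$, either an infinite clique $K_{n}$ (for all $n$) or an infinite complete bipartite graph $K_{3,n}$ (for all $n$); by Formulas~(1.1) and~(1.2) both $\gamma(K_{n})$ and $\gamma(K_{3,n})$ tend to infinity, contradicting $\gamma(\Bbb{AG}(R))<\infty$. Two mechanisms are available. First, if $R$ has a nonzero ideal $J$ with $J^{2}=(0)$ possessing infinitely many subideals, then any two subideals $J_{1},J_{2}\subseteq J$ satisfy $J_{1}J_{2}\subseteq J^{2}=(0)$, so they form an infinite clique; this handles the non-reduced case, where a suitable power of $\mathrm{Nil}(R)$ is square-zero. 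Second, if there are nonzero ideals $\mathfrak{a},\mathfrak{b}$ with $\mathfrak{a}\mathfrak{b}=(0)$ such that $\mathfrak{a}$ has at least three nonzero subideals and $\mathfrak{b}$ has infinitely many, then every subideal of $\mathfrak{a}$ is adjacent to every subideal of $\mathfrak{b}$, giving $K_{3,n}$ for all $n$; this handles the reduced case with at least two minimal primes $\mathfrak{q}_{1},\dots,\mathfrak{q}_{r}$, where one takes $\mathfrak{a}=\mathfrak{q}_{1}$ and $\mathfrak{b}=\mathfrak{q}_{2}\cap\cdots\cap\mathfrak{q}_{r}$, so that $\mathfrak{a}\mathfrak{b}\subseteq\mathfrak{q}_{1}\cap\cdots\cap\mathfrak{q}_{r}=\mathrm{Nil}(R)=(0)$ (and $\mathfrak{b}\not\subseteq\mathfrak{q}_{1}$ by incomparability, so $\mathfrak{b}\neq(0)$). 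In each case the required \emph{infinitude} of subideals is what $\dim R\geq 1$ must supply, via the fact that a positive-dimensional Noetherian local domain such as $R/\mathfrak{q}$ has infinitely many ideals, together with Lemma~\ref{2.3} and Remark~\ref{2.2} to count the relevant subspaces of $\mathfrak{m}^{k}/\mathfrak{m}^{k+1}$.

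The main obstacle is precisely the passage from ``$R$ is not Artinian'' to one of these two configurations, because non-Artinianness by itself is not enough: there exist non-Artinian Noetherian local rings (for instance $k[[x,y]]/(x^{2},xy)$, whose annihilating-ideal graph is a star) that are planar, with $\gamma=0$. Thus the argument must use $\gamma(\Bbb{AG}(R))>0$ essentially, namely to exclude the ``thin'' situations in which the relevant ideal has only one or two nonzero subideals and the graph degenerates to a planar one. Concretely I would split on whether $\mathfrak{m}\in\mathrm{Ass}(R)$ and on the square-zero structure of $\mathrm{Nil}(R)$, and in each surviving case locate an ideal $I$ with at least three subideal-vertices whose common annihilator $\Ann(I)$ still contains infinitely many ideals. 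Controlling products of ideals so that enough simultaneous adjacencies survive, and verifying that every configuration escaping the two mechanisms above is planar (hence excluded by $\gamma>0$), is the delicate technical core; everything else reduces to the two explicit subgraph counts recorded in Formulas~(1.1) and~(1.2).
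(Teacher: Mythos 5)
There is a genuine gap, and you have in fact located it yourself: everything in your proposal up to the last paragraph is a reduction of the problem to the case you then leave open. Your two mechanisms (an infinite clique inside a square-zero ideal with infinitely many subideals; an infinite $K_{3,n}$ built from two minimal primes in the reduced case) are fine as far as they go, but they do not cover the decisive situation, namely a non-Artinian local Noetherian ring whose nilpotent part is ``thin'' --- your own example $k[[x,y]]/(x^2,xy)$, whose annihilating-ideal graph is a planar star. Saying that one should ``split on whether $\mathfrak{m}\in\mathrm{Ass}(R)$'', ``locate an ideal $I$ with at least three subideal-vertices whose annihilator has infinitely many subideals'', and ``verify that every escaping configuration is planar'' is a description of the theorem to be proved, not a proof of it. As written, the argument establishes nothing in exactly the cases where the hypothesis $\gamma(\Bbb{AG}(R))>0$ must do the work.

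The paper closes this gap by a different route. It first invokes \cite[Corollary 3.6]{AB2012} to conclude that a local ring with $0<\gamma(\Bbb{AG}(R))<\infty$ is either Artinian with finitely many ideals or a Gorenstein ring (so ${\rm Ann}(\mathfrak{m})$ is simple); this external input replaces your reduced/non-reduced dichotomy entirely. In the Gorenstein non-Artinian case it then studies principal ideals: Lemma~\ref{n.2} shows there is at most one principal ideal $Rx$ with $|\Bbb{I}(Rx)|=3$ (two of them would force $K_{|\Bbb{I}(\mathfrak{m}^2)|,3}\subseteq\Bbb{AG}(R)$ with $|\Bbb{I}(\mathfrak{m}^2)|=\infty$), and for any other principal ideal $Rz$ either $|\Bbb{I}(Rz)|<\infty$, in which case $Rz\cong R/{\rm Ann}(z)$ together with the finiteness of $\Bbb{I}({\rm Ann}(z))$ makes $R$ an extension of Artinian modules and hence Artinian (a positive conclusion your all-by-contradiction plan has no counterpart for), or $|\Bbb{I}(Rz)|=\infty$, which forces ${\rm Ann}(z)\in\{{\rm Ann}(\mathfrak{m}),Rx\}$, so every other vertex has degree at most $2$ and $\Bbb{AG}(R)$ is a subgraph of the planar double star of Figure~1, contradicting $\gamma>0$. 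This degree-bounding argument is precisely the ``delicate technical core'' you deferred; without it, or without the appeal to \cite[Corollary 3.6]{AB2012}, your proposal does not yield the proposition.
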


\begin{pproof}
It suffices to show that if $(R,\mathfrak{m})$ is a local Noetherian ring with $ 0 < \gamma(\Bbb{AG}(R))<\infty$, then $(R,\mathfrak{m})$ is Artinian. By \cite[Corollary 3.6]{AB2012}, either $R$ is a Gorenstein ring or $R$ is an Artinian ring with only finitely many ideals. We may assume that $(R,\mathfrak{m})$ is a Gorenstein ring. We want to show that $(R,\mathfrak{m})$ is Artinian. If there exists a positive integer $n$ such that $\mathfrak{m}^{n}=\mathfrak{m}^{n+1}$, then by Nakayama's lemma, $\mathfrak{m}^{n}=(0)$. For every prime ideal $P$ of $R$, $\mathfrak{m}^{n}\subseteq P$. Thus $\mathfrak{m}=P$. Since every prime ideal is a maximal ideal, we conclude that $R$ is Artinian. So we may assume that for every positive integer $n$, $\mathfrak{m}^{n+1}\subsetneq \mathfrak{m}^n$.

If there exist $x,y\in \mathfrak{m}$ such that $|\Bbb{I}(Rx)|=3$, $|\Bbb{I}(Ry)|=3$, and $Rx\neq Ry$.  Then by Lemma \ref{n.2}, $\mathfrak{m}^2\subseteq {\rm Ann}(x) \cap {\rm Ann}(y)$. Since ${\rm Ann}(\mathfrak{m})\mathfrak{m}^2=(0), (Rx)\mathfrak{m}^2=(0)$ and $(Ry)\mathfrak{m}^2=(0)$, we have $K_{|\Bbb{I}(\mathfrak{m}^2)|,3}$ is a subgraph of $\Bbb{\gamma(\Bbb{AG}(R))}$. Note that $|\Bbb{I}(\mathfrak{m}^2)|=\infty$. Thus by formula (1.2), $\gamma(\Bbb{AG}(R))=\infty$, yielding a contradiction. Therefore, $R$ has at most one principal ideal $Rx$ such that $|\Bbb{I}(Rx)|=3$.

Let  $Rz$ be a principal ideal of $R$ other than $Rx$ and ${\rm Ann}(\mathfrak{m})$, Thus $|\Bbb{I}(Rz)|\geq 4$. If $|\Bbb{I}({\rm Ann}(z))|=\infty$, then $K_{|\Bbb{I}({\rm Ann}(z))|,3}$ is a subgraph of $\Bbb{AG}(R)$, and so by formula (1.2), $\gamma(\Bbb{AG}(R))=\infty$, yielding a contradiction. Thus we may assume that  $|\Bbb{I}({\rm Ann}(z))|<\infty$. We have the following cases:

{\bf Case 1:} $|\Bbb{I}(Rz)|<\infty$.  Then $Rz$ and ${\rm Ann}(z)$ are Artinian $R$-modules.  Since $Rz\cong R/{\rm Ann}(z)$,  by \cite[(1.20)]{La91} we conclude that $R$ is Artinian.

{\bf Case 2:} $ |\Bbb{I}(Rz)|=\infty$.  If  $|\Bbb{I}({\rm Ann}(z))|\geq 4$, Then $K_{|\Bbb{I}(Rz)|,3}$ is a subgraph of $\Bbb{AG}(R)$. Thus by formula (1.2), $\gamma(\Bbb{AG}(R))=\infty$, yielding a contradiction. We conclude that $|\Bbb{I}({\rm Ann}(z))|\leq 3$, and so ${\rm Ann}(z)\in \{{\rm Ann}(\mathfrak{m}), Rx\}$. Therefore, every principal ideal other than ${\rm Ann}(\mathfrak{m})$ and $Rx$ has degree at most 2, and it is adjacent to ${\rm Ann}(\mathfrak{m})$ or $Rx$. We can also conclude that every ideal other than ${\rm Ann}(\mathfrak{m})$ and $Rx$ has degree at most 2.  Thus $\Bbb{AG}(R)$ is isomorphic to a subgraph of Figure 1, and so $\gamma(\Bbb{AG}(R))=0$, yielding a contradiction.\hfill $\square$
\end{pproof}

\begin{center}
\begin{tikzpicture}[scale=0.5]
\tikzstyle{every node}=[draw, shape=circle, inner sep=2pt];
\node (v0) at (0,0)[draw, circle, fill][label=right:${{\rm Ann}(\mathfrak{m})}$]{};
\node (v1) at (4,3)[draw, circle, fill][label=right:${~~~~\ldots}$]{};
\node (v2) at (3,3)[draw, circle, fill][label=left:${}$]{};
\node (v3) at (2,3)[draw, circle, fill][label=right:${}$]{};
\node (v4) at (1,3)[draw, circle, fill][label=right:${}$]{};
\node (v5) at (-1,3)[draw, circle, fill][label=right:${}$]{};
\node (v6) at (-2,3)[draw, circle, fill][label=right:${}$]{};
\node (v7) at (-3,3)[draw, circle, fill][label=above right:${}$]{};
\node (v8) at (-4,3)[draw, circle, fill][label=left:${\ldots~~~~}$]{};
\node (v9) at (4,-3)[draw, circle, fill][label=right:${~~~~\ldots}$]{};
\node (v10) at (3,-3)[draw, circle, fill][label=above right:${}$]{};
\node (v11) at (2,-3)[draw, circle, fill][label=above right:${}$]{};
\node (v12) at (1,-3)[draw, circle, fill][label=above right:${}$]{};
\node (v13) at (-1,-3)[draw, circle, fill][label=above right:${}$]{};
\node (v14) at (-2,-3)[draw, circle, fill][label=above right:${}$]{};
\node (v15) at (-3,-3)[draw, circle, fill][label=above right:${}$]{};
\node (v16) at (-4,-3)[draw, circle, fill][label=left:${\ldots~~~~}$]{};
\node (v17) at (0,6)[draw, circle, fill][label=above:${Rx}$]{};

\draw (v0) -- (v1) (v2) -- (v0) --(v3)(v4) -- (v0)-- (v5) (v6)--(v0)--(v7)(v8)--(v0)--(v9)(v10) --(v0)--(v11)(v12)--(v0)-- (v13)(v14)--(v0)--(v15)(v16)--(v0)--(v17)--(v1)(v17)--(v2)(v17)--(v3)(v17)--(v4)(v17)--(v5)(v17)--(v6)(v17)--(v7)(v17)--(v8);
\end{tikzpicture}
\end{center}

\begin{center}
Figure 1
\end{center}

We now state our first main result.

\begin{ttheo}\label{2.7}

 Let $R$ be an Artinian  ring with $0<\gamma(\Bbb{AG}(R))<\infty$. Then $R$ has only finitely many ideals.

\end{ttheo}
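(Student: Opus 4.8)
The plan is to reduce to the local case and then exploit the structure results already established. An Artinian ring decomposes as a finite direct product $R \cong R_1 \times \cdots \times R_n$ of local Artinian rings $(R_i,\mathfrak{m}_i)$. First I would dispose of the case $n \geq 2$: if two or more factors are nontrivial, the ideals supported entirely in disjoint factors multiply to zero, so one can manufacture large complete bipartite subgraphs $K_{m,3}$ inside $\Bbb{AG}(R)$ unless each factor has only finitely many ideals. Concretely, if some factor $R_i$ had infinitely many ideals, then taking that infinite family on one side and three suitable annihilating ideals from the complementary factors on the other side forces $\gamma(\Bbb{AG}(R)) = \infty$ by Formula~(1.2), a contradiction. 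Hence in the product case every factor has finitely many ideals, so $R$ does too. This lets me assume $R = (R,\mathfrak{m})$ is local Artinian.

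In the local case I would invoke \cite[Corollary 3.6]{AB2012} as in Proposition~\ref{nlocal}: either $R$ already has finitely many ideals (and we are done), or $(R,\mathfrak{m})$ is Gorenstein. So assume $R$ is local Artinian Gorenstein with $\mathfrak{m}^{t+1} = (0)$ and $\mathfrak{m}^t \neq (0)$. The key numerical invariants are the successive dimensions $d_k := {\rm v.dim}_{R/\mathfrak{m}}\,\mathfrak{m}^k/\mathfrak{m}^{k+1}$. If every $d_k = 1$, then by Lemma~\ref{2.3} (the $n=1$ case) $R$ is a special principal ideal ring, hence has only finitely many ideals. If the residue field $R/\mathfrak{m}$ is finite, then each quotient $\mathfrak{m}^k/\mathfrak{m}^{k+1}$ is finite and $R$ has finitely many ideals outright. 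So the remaining obstacle is a local Gorenstein ring with infinite residue field in which some $d_k \geq 2$.

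The crux is to show this remaining case cannot have finite genus. Here I would push the Gorenstein duality: Gorenstein-ness forces $d_k = d_{t-k}$ for the socle-symmetric Hilbert function, so a jump $d_k \geq 2$ propagates, and Lemma~\ref{2.6} is precisely the tool that collapses $d_k = 2$ back to $d_k = 1$ at the top of the filtration (for $k = t-s$ with $s = t-1$ or $t-2$) whenever $\gamma(\Bbb{AG}(R)) < \infty$. The strategy is to run an inductive/peeling argument from the top layer downward: using Remark~\ref{2.2} (an infinite-field vector space is not a finite union of proper subspaces) I can choose infinitely many principal ideals $Rx$ with $x \in \mathfrak{m}^k \setminus \mathfrak{m}^{k+1}$ representing distinct directions, and Lemmas~\ref{n.1} and~\ref{n.2} control $|\Bbb{I}(Rx)|$ and the annihilators. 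Whenever a fixed annihilator (such as ${\rm Ann}(\mathfrak{m}^{t-2})$ or ${\rm Ann}(\mathfrak{m})$) kills infinitely many distinct ideals that all pairwise annihilate a common small set, I extract a $K_{m,3}$ and conclude $\gamma = \infty$.

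The main obstacle I anticipate is the bookkeeping in this last case: one must carefully track which power of $\mathfrak{m}$ annihilates each chosen $x_i$ and ensure the three common neighbors (typically $\mathfrak{m}^t, \mathfrak{m}^{t-1}$, and a principal ideal inside ${\rm Ann}(\mathfrak{m})$ or the socle) genuinely annihilate the entire infinite family, so that the bipartite subgraph is complete. Lemma~\ref{2.6} already handles the delicate top layers; the remaining work is to combine it with the Gorenstein symmetry of the $d_k$ to rule out a jump anywhere, after which Lemma~\ref{2.3} finishes by forcing $R$ to be a special principal ideal ring. I would therefore organize the proof as: (1) the product reduction, (2) reduction to local Gorenstein with infinite residue field, (3) the finite-field and all-$d_k=1$ subcases handled directly, and (4) the infinite-field jump case eliminated via Lemma~\ref{2.6} together with a $K_{m,3}$ genus argument.
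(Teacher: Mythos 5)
Your outline has a genuine gap at its core step (4). You plan to eliminate every ``jump'' $d_k\geq 2$ by extracting a complete bipartite subgraph $K_{m,3}$ with $m$ infinite and concluding $\gamma(\Bbb{AG}(R))=\infty$. That cannot work for the low nilpotency indices $t=2$ and $t=3$: there, a local Artinian Gorenstein ring with infinite residue field and ${\rm v.dim}_{R/\mathfrak{m}}\,\mathfrak{m}/\mathfrak{m}^{2}=2$ does have infinitely many ideals, and yet its annihilating-ideal graph is \emph{planar} (it is a subgraph of Figure~2, resp.\ Figure~3 --- essentially a star, resp.\ a double star, centered at $\mathfrak{m}^{t}$ and $\mathfrak{m}^{t-1}$), so no large $K_{m,3}$ exists and no contradiction with $\gamma<\infty$ is available. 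This is exactly why such rings survive into parts (b.3) and (c.3) of Theorem~\ref{last}. The paper's proof disposes of these cases by a degree analysis showing every ideal other than $\mathfrak{m}^{2}$ (and $\mathfrak{m}^{3}$) has degree at most $2$ or $3$ with prescribed neighbors, forcing $\gamma(\Bbb{AG}(R))=0$ and contradicting the \emph{lower} bound $\gamma>0$ in the hypothesis. Your proposal never uses $\gamma>0$ at this stage, so as written it cannot close the $t=2,3$ cases.

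A second problem is your appeal to ``Gorenstein duality'' in the form $d_k=d_{t-k}$. Symmetry of the Hilbert function holds for graded Artinian Gorenstein algebras, but not for local Artinian Gorenstein rings in general, so a jump at some $k$ does not automatically propagate to the top of the filtration where Lemma~\ref{2.6} applies; and in any case Lemma~\ref{2.6} only covers $t\geq 5$ with the jump in the two layers just below $\mathfrak{m}^{t-1}$, so the cases $t=4$ and jumps elsewhere need the separate ad hoc arguments the paper supplies (the $K_{|\Bbb{I}(\mathfrak{m}^{3})|-3,3}$ bound, the analysis of $|\Bbb{I}(Ry)|$ finite versus infinite, etc.). Your product reduction for non-local $R$ and your handling of the finite residue field and all-$d_k=1$ subcases are fine, and the citation of the AB2012 structure theorem is the right entry point (though you want the version bounding ${\rm v.dim}_{R/\mathfrak{m}}\,\mathfrak{m}/\mathfrak{m}^{2}\leq 2$, not merely ``Gorenstein''), but steps (3)--(4) need to be reorganized around the dichotomy ``either $\gamma=\infty$ or $\gamma=0$'' rather than around $\gamma=\infty$ alone.
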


\begin{proof}
By \cite[Theorem 2.7]{AB2012}, we have either $R$ has only  finitely many ideals or $(R,\mathfrak{m})$ is a Gorenstein ring with ${\rm v.dim}_{R/\mathfrak{m}}\mathfrak{m}/\mathfrak{m}^2\leq2$.   So we may assume that $(R,\mathfrak{m})$  is a Gorenstein ring with ${\rm v.dim}_{R/\mathfrak{m}}\mathfrak{m}/\mathfrak{m}^2\leq 2$.  If
$|R/{\mathfrak{m}}|<\infty$, then one can easily see that $R$ is a
finite ring,  so $R$ has only finitely many ideals. Thus we can
assume that $|R/{\mathfrak{m}}|=\infty$, $\gamma(\Bbb{AG}(R))=
g$ for an integer $g>0$, and ${\it {\rm v.dim}_{R/{\mathfrak{m}}}} ({\rm Ann}({\mathfrak{m}})) =1$.
 Since $R$ is an Artinian ring, there exists a positive integer $t$
 such that
 ${\mathfrak{m}}^{t+1}=(0)$ and ${\mathfrak{m}}^{t} \neq (0)$. Note that ${\mathfrak{m}}^{t}\subseteq {\rm Ann}(\mathfrak{m})$ and ${\rm dim}({\rm Ann}(\mathfrak{m}))=1$, thus ${\rm Ann}(\mathfrak{m})={\mathfrak{m}}^{t} $. Let $I$ be a minimal ideal of $R$. Then $I\mathfrak{m}=(0)$. Hence $I\subseteq {\rm Ann}(\mathfrak{m})$ and so $I=\mathfrak{m}^{t}$. Therefore, $\mathfrak{m}^{t}$ is the {\bf unique minimal ideal} of $R$. We now proceed the proof using the case by case analysis.

 {\bf Case 1:}  $t=1$, i.e., ${\mathfrak{m}}^{2}=(0)$. Then since $\mathfrak{m}=\mathfrak{m}^{t}$ is also the unique minimal ideal, $R$ has exactly two proper ideals as desired.

{\bf Case 2:} $t=2$, i.e., ${\mathfrak{m}}^{2}\neq (0)$ and
${\mathfrak{m}}^{3}=(0)$. We first  assume that ${\it {\rm v.dim}_{R/{\mathfrak{m}}}} {\mathfrak{m}}/{\mathfrak{m}}^{2}= 2$. Let $I\neq {\mathfrak{m}}^{2}$ be an ideal. If $I\neq {\mathfrak{m}}$, then
 since ${\mathfrak{m}}^{2} \subseteq I$ and ${\rm v.dim}_{R/\mathfrak{m}}I/\mathfrak{m}^{2}=1$, we conclude that $I=Rx$ for some $x \in \mathfrak{m}\setminus \mathfrak{m}^{2}$. Note that $Rx\cong R/ {\rm Ann}(x)$. Since  $|\Bbb{I}(Rx)^{*}|=2$ (by Lemma~\ref{n.1}), we conclude that there is exactly one ideal between ${\rm Ann}(x)$ and $\mathfrak{m}$. Therefore, ${\it {\rm v.dim}_{R/{\mathfrak{m}}}} {\rm Ann}(x)/{\mathfrak{m}}^{2}=1$. So,   by Lemma~\ref{n.1} $|\Bbb{I}({\rm Ann}(x))^{*}|=2$. We conclude that  every ideal except for $\mathfrak{m}^{2}$ has degree at most $2$, so $\Bbb{AG}(R)$ is a subgraph of Figure 2, hence $\gamma(\Bbb{AG}(R))=0$, yielding a contradiction. Therefore, ${\it {\rm v.dim}_{R/{\mathfrak{m}}}} {\mathfrak{m}}/{\mathfrak{m}}^{2}=1$. Thus by
 Lemma~\ref{2.3},  $R$ is a special principal ideal ring and has only two nonzero proper ideals ${\mathfrak{m}}$ and ${\mathfrak{m}}^{2}$. 

\begin{center}
\begin{tikzpicture}[scale=0.6]
\tikzstyle{every node}=[draw, shape=circle, inner sep=2pt];
\node (v0) at (0,0)[draw, circle, fill][label=right:${{\rm Ann}(\mathfrak{m})}$]{};
\node (v1) at (4,3)[draw, circle, fill][label=right:${~~~~\ldots}$]{};
\node (v2) at (3,3)[draw, circle, fill][label=left:${}$]{};
\node (v3) at (2,3)[draw, circle, fill][label=right:${}$]{};
\node (v4) at (1,3)[draw, circle, fill][label=right:${}$]{};
\node (v5) at (-1,3)[draw, circle, fill][label=right:${}$]{};
\node (v6) at (-2,3)[draw, circle, fill][label=right:${}$]{};
\node (v7) at (-3,3)[draw, circle, fill][label=above right:${}$]{};
\node (v8) at (-4,3)[draw, circle, fill][label=left:${\ldots~~~~}$]{};
\node (v9) at (4,-3)[draw, circle, fill][label=right:${~~~~\ldots}$]{};
\node (v10) at (3,-3)[draw, circle, fill][label=above right:${}$]{};
\node (v11) at (2,-3)[draw, circle, fill][label=above right:${}$]{};
\node (v12) at (1,-3)[draw, circle, fill][label=above right:${}$]{};
\node (v13) at (-1,-3)[draw, circle, fill][label=above right:${}$]{};
\node (v14) at (-2,-3)[draw, circle, fill][label=above right:${}$]{};
\node (v15) at (-3,-3)[draw, circle, fill][label=above right:${}$]{};
\node (v16) at (-4,-3)[draw, circle, fill][label=left:${\ldots~~~~}$]{};

\draw (v0) -- (v1)-- (v2) -- (v0) --(v3)--(v4) -- (v0)-- (v5) --(v6)--(v0)--(v7)--(v8)--(v0)--(v9)(v10) --(v0)--(v11)(v12)--(v0)-- (v13)(v14)--(v0)--(v15)(v16)--(v0);
\end{tikzpicture}
\end{center}

\begin{center}
{Figure 2}
\end{center}

{\bf Case 3:}   $t =3$, i.e., ${\mathfrak{m}}^{4}=(0)$. Recall that ${\rm v.dim}_{R/{\mathfrak{m}}} {\mathfrak{m}}/{\mathfrak{m}}^{2}\leq 2$. First assume that ${\rm v.dim}_{R/{\mathfrak{m}}} {\mathfrak{m}}/{\mathfrak{m}}^{2}=2$. If ${\rm v.dim}_{R/{\mathfrak{m}}} {\mathfrak{m}}^{2}/{\mathfrak{m}}^{3}\geq 2$, then since ${\mathfrak{m}}^{4}=(0)$, we conclude that $K_{|\Bbb{I}(\mathfrak{m}^{2})|-1}$ is a subgraph of $\Bbb{AG}(R)$. Note that $|\Bbb{I}(\mathfrak{m}^{2})|=\infty$. Thus by Formula (1.1), $\gamma(\Bbb{AG}(R))=\infty$, yielding a contradiction. Therefore,  ${\rm v.dim}_{R/{\mathfrak{m}}} {\mathfrak{m}}^{2}/{\mathfrak{m}}^{3}=1$, and so by Lemma~\ref{2.3}, $\Bbb{I}({\mathfrak{m}}^{2})^{*}=\{{\mathfrak{m}}^{2}, {\mathfrak{m}}^{3}\}$.

  We now claim that  $| \Bbb{I}({\rm Ann}({\mathfrak{m}}^{2}))|=\infty$. Let $x_1\in \mathfrak{m}\setminus \mathfrak{m}^{2}$.  Then by Lemma~\ref{n.1}, $|\Bbb{I}(Rx_1)|=|\Bbb{I}(\mathfrak{m}^{2}\cap Rx_1)|+1<\infty$.
	Thus there are finitely many ideals between ${\rm Ann}(x_{1})$ and ${\mathfrak{m}}$. So ${\rm v.dim} ({\rm Ann}(x_{1})+{\mathfrak{m}}^{2})/{\mathfrak{m}}^{2}=1$. Hence ${\rm Ann}(x_{1})\cap ({\mathfrak{m}}\setminus {\mathfrak{m}}^{2})\neq (0)$. Let $x_2\in {\rm Ann}(x_1)$. If  $\mathfrak{m}^{2}\nsubseteq Rx_1$, then by Lemma~\ref{n.1}, $|\Bbb{I}(Rx_1)|=|\Bbb{I}(m^{2}\cap Rx_1)|+1$. Thus $|\Bbb{I}(Rx_1)|=3$, and by Lemma~\ref{n.2}, $\mathfrak{m}^{2}\subseteq {\rm Ann}(x_1)$. Therefore,  ${\mathfrak{m}}^{2}\subseteq {\rm Ann}(x_1)$ or $\mathfrak{m}^{2}\subseteq Ann(x_2)$.
	 Let $x_{2i-1}\in {\mathfrak{m}}\setminus {\mathfrak{m}}^{2}$ $(i\geq 2)$ such that $\{x_{2i-1}+{\mathfrak{m}}^{2}, x_2+{\mathfrak{m}}^{2}\}$ and  $\{x_{2i-1}+{\mathfrak{m}}^{2}, x_{2j-1}+{\mathfrak{m}}^{2}\}$ for $j=1, 2,\cdots, i-1$ be a basis for ${\mathfrak{m}}/{\mathfrak{m}}^{2}$.
 If ${\mathfrak{m}}^{2}\nsubseteq Rx_{2i-1}$, then ${\mathfrak{m}}^{2}\subseteq {\rm Ann}(x_{2i-1})$. Since either ${\mathfrak{m}}^{2}\subseteq {\rm Ann}(x_1)$ or ${\mathfrak{m}}^{2}\subseteq{\rm Ann}(x_2)$, we conclude that either $(0)={\mathfrak{m}}^{2} (Rx_1+Rx_{2i-1})={\mathfrak{m}}^{2}{\mathfrak{m}}={\mathfrak{m}}^{3}$ or $(0)={\mathfrak{m}}^{2} (Rx_2+Rx_{2i-1})={\mathfrak{m}}^{2}{\mathfrak{m}}={\mathfrak{m}}^{3}$, yielding a contradiction. Thus ${\mathfrak{m}}^{2}\subseteq Rx_{2i-1}$. As in the above, ${\rm Ann}(x_{2i-1})\cap ({\mathfrak{m}}\setminus {\mathfrak{m}}^{2})\neq (0)$.
We can find $x_{2i}\in {\rm Ann}(x_{2i-1})\cap ({\mathfrak{m}}\setminus {\mathfrak{m}}^{2})$. Since ${\mathfrak{m}}^{2}\subseteq Rx_{2i-1}$, ${\mathfrak{m}}^{2}\subseteq {\rm Ann}(x_{2i})$. If $Rx_{2i}=Rx_{2j}$ for some $j=1,2,\cdots, i-1$, then $Rx_{2i}(Rx_{2i-1}+Rx_{2j-1})=0$, and so $Rx_{2i}{\mathfrak{m}}=0$, yielding a contradiction since ${\rm Ann}({\mathfrak{m}})={\mathfrak{m}}^{t}$. Since $Rx_{2i}\in \Bbb{I}({\rm Ann}({\mathfrak{m}}^{2}))$ for each $i$, so $| \Bbb{I}({\rm Ann}({\mathfrak{m}}^{2}))|=\infty$, as we claimed.  Next we characterize the degree of every ideal $\mathfrak{m}\neq I\not \in \Bbb{I}({\mathfrak{m}}^{2})$ in $\Bbb{AG}(R)$:

{\bf Type1:} ${\mathfrak{m}}^{2}\nsubseteq I$.  By  Lemma \ref{n.1},  $|\Bbb{I}(Ry)|=3$, thus it is easy to see that I is a principal ideal, so  $I=Ry$ for some $y\in \mathfrak{m}$. Also, by Lemma~\ref{n.2}, $\mathfrak{m}^{2}\subseteq {\rm Ann}(y)$. Since ${\rm v.dim}({\rm Ann}(y)/{\mathfrak{m}}^{2})=1$, there exists $z\in {\rm Ann}(y)\setminus {\mathfrak{m}}^{2}$ such that  ${\rm Ann}(y)=Rz+{\mathfrak{m}}^{2}$. If ${\mathfrak{m}}^{2}\subseteq Rz$, we conclude that ${\rm Ann}(y)=Rz$. So by Lemma~\ref{n.1}, $|\Bbb{I}({\rm Ann}(y))|=4$. Therefore,   $deg(I)=3$ and $I$ is adjacent to $\mathfrak{m}^2$ and $\mathfrak{m}^3$.
     If ${\mathfrak{m}}^{2}\nsubseteq Rz$, then $|\Bbb{I}(Rz)|=3$ and by Lemma~\ref{n.2}, ${\mathfrak{m}}^{2}Rz=(0)$. Since ${\rm Ann}(y)=Rz+{\mathfrak{m}}^{2}$ and ${\mathfrak{m}}^{2}Rz=(0)$, we conclude that  ${\rm Ann}(y)={\rm Ann}({\mathfrak{m}}^{2})$. Thus  $Ry{\rm Ann}({\mathfrak{m}}^{2})=(0)$, ${\mathfrak{m}}^{2}{\rm Ann}({\mathfrak{m}}^{2})=(0)$, and ${\mathfrak{m}}^{3}{\rm Ann}({\mathfrak{m}}^{2})=(0)$. Therefore, $K_{|\Bbb{I}({\rm Ann}({\mathfrak{m}}^{2}))|,3}$ is a subgraph of $\Bbb{AG}(R)$, and so by Formula~(1.2), $\gamma(\Bbb{AG}(R))=\infty$, yielding a contradiction.

{\bf Type 2:} ${\mathfrak{m}}^{2}\subseteq I$. Since ${\rm v.dim}_{R/\mathfrak{m}}(I/{\mathfrak{m}}^{2})=1$, there exists $z\in I\setminus {\mathfrak{m}}^{2}$ such that  $I=Rz+{\mathfrak{m}}^{2}$. If ${\mathfrak{m}}^{2}\subseteq Rz$, we conclude that $I=Rz$.  If $\mathfrak{m}^{2}\subseteq {\rm Ann}(z)$, then there exists exactly one ideal between ${\rm Ann}(z)$ and $R$. Since $Rz\cong R/{\rm Ann}(z)$ and by Lemma~\ref{n.1}, $|\Bbb{I}(Rz)|=4$, we conclude that there exist two ideals between ${\rm Ann}(z)$ and $R$, yielding a contradiction. Thus ${\mathfrak{m}}^{2}\nsubseteq {\rm Ann}(z)$.  Therefore ${\rm Ann}(z)$ is a ideal of type 1. So ${\rm Ann(z)}$ is a principal ideal and $|\Bbb{I}({\rm Ann}(z))|=3$, thus $deg(I)=2$. If ${\mathfrak{m}}^{2}\nsubseteq Rz$, then by Lemma \ref{n.1} and Lemma~\ref{n.2}, ${\mathfrak{m}}^{2}Rz=(0)$. Since $I=Rz+{\mathfrak{m}}^{2}$ and ${\mathfrak{m}}^{2}Rz=(0)$, we conclude that  $I={\rm Ann}({\mathfrak{m}}^{2})$. If $deg(I)\geq 3$, then $K_{|\Bbb{I}({\rm Ann}({\mathfrak{m}}^{2}))|,3}$ is a subgraph of $\Bbb{AG}(R)$. Since $|\Bbb{I}({\rm Ann}({\mathfrak{m}}^{2}))|=\infty$, by Formula~(1.2), $\gamma(\Bbb{AG}(R))=\infty$, yielding a contradiction.  Therefore, $deg(I)=2$ and $I$ is only adjacent to $\mathfrak{m}^2$ and $\mathfrak{m}^3$.

 Therefore, every ideal (except for ${\mathfrak{m}}^{2}$ and $\mathfrak{m}^{3}$) has degree at most $3$, and every ideal with degree 3 is adjacent to ${\mathfrak{m}}^{2}$ and $\mathfrak{m}^{3}$. We  conclude that $\gamma(\Bbb{AG}(R))$ is a subgraph of the graph in Figure 3 below, and so  $\gamma(\Bbb{AG}(R))=0$, yielding a contradiction.

\begin{center}
\begin{tikzpicture}[scale=0.5]
\tikzstyle{every node}=[draw, shape=circle, inner sep=2pt];
\node (v0) at (0,0)[draw, circle, fill][label=right:${{\rm Ann}(\mathfrak{m})}$]{};
\node (v1) at (4,3)[draw, circle, fill][label=right:${~~~~\ldots}$]{};
\node (v2) at (3,3)[draw, circle, fill][label=left:${}$]{};
\node (v3) at (2,3)[draw, circle, fill][label=right:${}$]{};
\node (v4) at (1,3)[draw, circle, fill][label=right:${}$]{};
\node (v5) at (-1,3)[draw, circle, fill][label=right:${}$]{};
\node (v6) at (-2,3)[draw, circle, fill][label=right:${}$]{};
\node (v7) at (-3,3)[draw, circle, fill][label=above right:${}$]{};
\node (v8) at (-4,3)[draw, circle, fill][label=left:${\ldots~~~~}$]{};
\node (v9) at (4,-3)[draw, circle, fill][label=right:${~~~~\ldots}$]{};
\node (v10) at (3,-3)[draw, circle, fill][label=above right:${}$]{};
\node (v11) at (2,-3)[draw, circle, fill][label=above right:${}$]{};
\node (v12) at (1,-3)[draw, circle, fill][label=above right:${}$]{};
\node (v13) at (-1,-3)[draw, circle, fill][label=above right:${}$]{};
\node (v14) at (-2,-3)[draw, circle, fill][label=above right:${}$]{};
\node (v15) at (-3,-3)[draw, circle, fill][label=above right:${}$]{};
\node (v16) at (-4,-3)[draw, circle, fill][label=left:${\ldots~~~~}$]{};
\node (v17) at (0,6)[draw, circle, fill][label=above:${\mathfrak{m}^3}$]{};

\draw (v0) -- (v1) (v2) -- (v0) --(v3)(v4) -- (v0)-- (v5) --(v6)--(v0)--(v7)--(v8)--(v0)--(v9)(v10) --(v0)--(v11)(v12)--(v0)-- (v13)(v14)--(v0)--(v15)(v16)--(v0)--(v17)--(v1)(v17)--(v2)(v17)--(v3)(v17)--(v4)(v17)--(v5)(v17)--(v6)(v17)--(v7)(v17)--(v8);
\end{tikzpicture}
\end{center}

\begin{center}
{Figure 3}
\end{center}


{\bf Case 4:}   $t \geq 4$. Since ${\mathfrak{m}}^{3}{\mathfrak{m}}^{t}={\mathfrak{m}}^{3}{\mathfrak{m}}^{t-1}={\mathfrak{m}}^{3}{\mathfrak{m}}^{t-2}=(0)$,
$K_{|\Bbb{I}({\mathfrak{m}}^{3})|-3,3}$ is a subgraph of
$\Bbb{AG}(R)$. So by Formula~(1.2),
$\lceil(|\Bbb{I}({\mathfrak{m}}^{3})|-5)/4\rceil\leq g$. Hence,
$|\Bbb{I}({\mathfrak{m}}^{3})| \leq 4g+5$. If ${\it {\rm
v.dim}_{R/{\mathfrak{m}}}}({\mathfrak{m}}^{t-1}/{\mathfrak{m}}^{t})\geq
2$, then Remark \ref{2.2} implies that
$|\Bbb{I}({\mathfrak{m}}^{t-1})|=\infty$.
 Since
${\mathfrak{m}}^{t-1}{\mathfrak{m}}^{t-1}=(0)$ and $t\geq 3$,
$K_{|\Bbb{I}({\mathfrak{m}}^{t-1})|-1}$ is a subgraph of
$\Bbb{AG}(R)$. Therefore, by Formula~(1.1),  $\gamma(\Bbb{AG}(R)) = \infty$, a
contradiction. Thus ${\it {\rm
v.dim}_{R/{\mathfrak{m}}}}({\mathfrak{m}}^{t-1}/{\mathfrak{m}}^{t})=
1$. Hence, by Lemma~\ref{2.3}, there exists
$x \in {\mathfrak{m}}^{t-1}$ such that ${\mathfrak{m}}^{t-1}= Rx$.

Now we prove that ${\rm v.dim}_{R/{\mathfrak{m}}}{\mathfrak{m}}/{\mathfrak{m}}^{2}= 1$. Suppose on the contrary that ${\rm {\rm
v.dim}}_{R/{\mathfrak{m}}}{\mathfrak{m}}/{\mathfrak{m}}^{2}=2$. By Lemma~\ref{n.2},
${\mathfrak{m}}^{2}\subseteq {\rm Ann}(x)$, and since
${\mathfrak{m}}/{\rm Ann}(x)$ is the only nonzero proper ideal of
$R/{\rm Ann}(x)$, ${\rm v.dim}_{R/{\mathfrak{m}}}{\rm
Ann}(x)/{\mathfrak{m}}^{2}=1$. Let
$\{y_{1}+{\mathfrak{m}}^{2},y+{\mathfrak{m}}^{2}\}$
be a basis for ${\mathfrak{m}}/{\mathfrak{m}}^{2}$ such that
$\{y_{1}+{\mathfrak{m}}^{2}\}$
is a basis for
${\rm Ann}(x)/{\mathfrak{m}}^{2}$. Since ${\mathfrak{m}}^{t}$ is the only minimal ideal of $R$, ${\mathfrak{m}}^{t}\subseteq Ry$ and $|\Bbb{I}(Ry)|\geq 3$.
If  $|\Bbb{I}(Ry)|=3$, then  by Lemma~\ref{n.2},
${\mathfrak{m}}^{2}\subseteq {\rm Ann}(y)$. So
${\mathfrak{m}}^{t-1}\subseteq {\rm Ann}(y)$. Hence,
${\mathfrak{m}}^{t-1}{\mathfrak{m}}=(0)$
(${\mathfrak{m}}^{t-1}(Ry)=(0)$ and
${\mathfrak{m}}^{t-1}(Ry_{1})=(0)$), a contradiction.
Therefore,
$|\Bbb{I}(Ry)|\geq 4$. We now divided the proof into two subcases according to whether or not $|\Bbb{I}(Ry)|=\infty$, and show that both subcases are impossible.

{\bf Subcase 4.1:} $4 \leq |\Bbb{I}(Ry)|<\infty$.  We now claim that
  ${\rm v.dim}_{R/{\mathfrak{m}}}{\mathfrak{m}}^{2}/{\mathfrak{m}}^{3}=1$.
Suppose that ${\rm v.dim}_{R/{\mathfrak{m}}}{\mathfrak{m}}^{2}/{\mathfrak{m}}^{3}=l\geq 3$.
 Since ${\mathfrak{m}}^{2}y\cong {\mathfrak{m}}^{2}/\Ann(y)\cap {\mathfrak{m}}^{2}$ and $|\Bbb{I}({\mathfrak{m}}^{2}y)|\leq |\Bbb{I}({\mathfrak{m}}^{3})|<\infty$, we conclude that ${\rm v.dim}_{R/\mathfrak{m}} (\Ann(y)\cap {\mathfrak{m}}^{2}+{\mathfrak{m}}^{3}/{\mathfrak{m}}^{3})=l-1$, and so $|\Bbb{I}(Ann(y))|=\infty$. Since $K_{|\Bbb{I}(\Ann(y))|, |\Bbb{I}(Ry)|}$ is a subgraph of $\Bbb{AG}(R)$, by Formula (1.2), $\gamma(\Bbb{AG}(R))=\infty$,  yielding a contradiction. So we  have
 ${\rm v.dim}_{R/{\mathfrak{m}}}{\mathfrak{m}}^{2}/{\mathfrak{m}}^{3}\leq 2$.

Assume that $t=4$. Suppose that  ${\rm v.dim}_{R/{\mathfrak{m}}}{\mathfrak{m}}^{2}/{\mathfrak{m}}^{3}= 2$.  Recall that ${\rm v.dim}_{R/{\mathfrak{m}}}{\mathfrak{m}}/{\mathfrak{m}}^2= 2$. If there exists $w\in \mathfrak{m}\setminus \mathfrak{m}^3$ such that $|\Bbb{I}(Rw)|=3$, then by Lemma \ref{n.2}, $\mathfrak{m}^2\subseteq {\rm Ann(w)}$. Therefore, $(Rw)\mathfrak{m}^2=\mathfrak{m}^3\mathfrak{m}^2=\mathfrak{m}^4 \mathfrak{m}^2=(0)$, and so $K_{|\Bbb{I}(\mathfrak{m}^2)|,3}$ is a subgraph of $\Bbb{AG}(R)$. Thus by Formula (1.2), $\gamma(\Bbb{AG}(R))=\infty$, yielding a contradiction. So for every $w\in \mathfrak{m}\setminus \mathfrak{m}^3$ we have  $|\Bbb{I}(Rw)|\geq 4$.

 Let $s\in \mathfrak{m}^2\setminus \mathfrak{m}^3$. Then by Lemma \ref{n.2}, we have $\mathfrak{m}^3\subseteq Rs$. Since
$\Bbb{I}(Rw)=\Bbb{I}(Rw\cap \mathfrak{m}^2)\cup \{Rw\}$,
$|\Bbb{I}(Rw)|\geq 4$.  We conclude that every ideal except $\mathfrak{m}^4$ contains $\mathfrak{m}^3$ and so
every ideal except $\mathfrak{m}^4$ contains $\mathfrak{m}^3$. Let $\{r_1+\mathfrak{m}^2,r_2+\mathfrak{m}^2\}$ be a basis for $\mathfrak{m}/\mathfrak{m}^2$. Then since $\mathfrak{m}^3\subseteq {\rm Ann}(r_1)\cap {\rm Ann}(r_2)$, we have $\mathfrak{m}^3\mathfrak{m}=(0)$, yielding a contradiction. Therefore,  ${\rm v.dim}_{R/{\mathfrak{m}}}{\mathfrak{m}}^{2}/{\mathfrak{m}}^{3}= 1$.

Let $v\in \mathfrak{m}\setminus \mathfrak{m}^2$.  Then by Lemma \ref{n.2}, $|\Bbb{I}(Rv)|= 3$, $4$ or $5$.
 If $|\Bbb{I}(Rv)|=5$, then since $Rv\cong R/{\rm Ann(v)}$, we must have  three ideals between ${\rm Ann(v)}$ and $R$. But since by Lemma \ref{n.2}, we have ${\rm Ann}(v)+\mathfrak{m}^3\subseteq {\rm Ann}(v)+\mathfrak{m}^2$. Thus there only exist two ideals between ${\rm Ann(v)}$ and $R$, yielding a contradiction. Therefore, for every  $v\in \mathfrak{m}\setminus \mathfrak{m}^2$, $|\Bbb{I}(Rv)|=3$ or  $4$. If $|\Bbb{I}(Rv)|=3$, then by Lemma \ref{n.2}, $\mathfrak{m}^2\subseteq Rv$.  $|\Bbb{I}(Rv)|=4$, then since $Rv\cong R/{\rm Ann}(v)$, there exist only two ideals between ${\rm Ann}(v)$ and $R$, we conclude that  $\mathfrak{m}^3\subseteq {\rm Ann}(v)$. Again, same as above $\mathfrak{m}^3\mathfrak{m}=(0)$, yielding a contradiction.

So we assume that $t\geq 5$. Then by Lemma \ref{2.6}, ${\rm v.dim}_{R/{\mathfrak{m}}}{\mathfrak{m}}^{2}/{\mathfrak{m}}^{3}=1$. Since ${\rm v.dim}_{R/{\mathfrak{m}}}{\mathfrak{m}}/{\mathfrak{m}}^{2}\leq 2$, again by  Lemma \ref{2.6}, ${\rm v.dim}_{R/{\mathfrak{m}}}{\mathfrak{m}}/{\mathfrak{m}}^{2}=1$, yielding a contradiction.

{\bf Subcase 4.2:} $|\Bbb{I}(Ry)|=\infty$.
 Suppose  that ${\rm v.dim}_{R/{\mathfrak{m}}}{\mathfrak{m}}^{2}/{\mathfrak{m}}^{3}\geq
 2$.  If $|\Bbb{I}({\rm Ann}(y))|\geq 4$, then $K_{|\Bbb{I}(Ry)|,3}$ is a subgraph of
 $\Bbb{AG}(R)$ and so by Formula~(1.2),
  $\gamma(\Bbb{AG}{R})=\infty$, a contradiction. We may assume that $|\Bbb{I}({\rm Ann}(y))|=3$.
   Since ${\mathfrak{m}}^{2}y\cong {\mathfrak{m}}^{2}/({\rm Ann}(y)\cap {\mathfrak{m}}^{2})$, there exist only
finitely many
 ideals between ${\rm Ann}(y)$ and ${\mathfrak{m}}^{2}$. Therefore, ${\rm Ann}(y)\nsubseteq {\mathfrak{m}}^{3}$ and so  there exists $z\in {\rm Ann}(y)\setminus
 {\mathfrak{m}}^{3}$.  Since $|\Bbb{I}({\rm Ann}(y))|=3$, $Rz={\rm Ann}(y)$. Thus by Lemma~\ref{n.2}
${\mathfrak{m}}^{2}\subseteq {\rm Ann}(z)={\rm Ann}({\rm
Ann}(y))$. Since ${\mathfrak{m}}^{2}{\rm Ann}(y)=(0)$,
${\mathfrak{m}}^{2}{\mathfrak{m}}^{t-1}=(0)$, and
 ${\mathfrak{m}}^{t}{\mathfrak{m}}^{2}=(0)$, $K_{|\Bbb{I}({\mathfrak{m}}^{2})|,3}$ is a subgraph of
 $\Bbb{AG}(R)$. So by Formula~(1.2),
  $\gamma(\Bbb{AG}(R))=\infty$, a contradiction. Therefore,
 ${\rm v.dim}_{R/{\mathfrak{m}}}{\mathfrak{m}}^{2}/{\mathfrak{m}}^{3}=1$ and by Lemma \ref{2.3},
 $|\Bbb{I}({\mathfrak{m}}^{2})|<\infty$. Also, by Lemma~\ref{n.1}, $|\Bbb{I}(Ry)|<\infty$, yielding a contradiction.\\

 Therefore, we have either $R$ is a finite ring or ${\rm v.dim}_{R/{\mathfrak{m}}}{\mathfrak{m}}/{\mathfrak{m}}^{2}=1$. In the latter case,  by Lemma~\ref{2.3} $R$ has only finitely many ideals. The proof is complete. \hfill $\square$
\end{proof}

\begin{ccoro}
Let $R$ be a Noetherian ring such that all non-trivial ideals of $R$ are vertices of $\Bbb{AG}(R)$. If $0<\gamma(\Bbb{AG}(R))<\infty$, then $R$ is an Artinian ring with only finitely many ideals.
\end{ccoro}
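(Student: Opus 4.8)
The plan is to reduce the whole statement to Theorem \ref{2.7}: it suffices to prove that $R$ is Artinian, since then, as $0<\gamma(\Bbb{AG}(R))<\infty$, Theorem \ref{2.7} immediately gives that $R$ has only finitely many ideals. So the entire task is to show $R$ is Artinian, i.e. that $\dim R=0$.

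First I would unpack the hypothesis that every non-trivial ideal is a vertex, i.e. every nonzero proper ideal has nonzero annihilator. Applying this to principal ideals $Ra$ shows every nonzero non-unit is a zero-divisor, so $Z(R)$ equals the union of the maximal ideals. Since $R$ is Noetherian, $Z(R)=\bigcup_{P\in\mathrm{Ass}(R)}P$, and prime avoidance forces each maximal ideal to lie in, hence equal, some $P\in\mathrm{Ass}(R)$; as $\mathrm{Ass}(R)$ is finite, $R$ is semilocal. Two degenerate cases clear at once: if $R$ is a domain then $\Bbb{AG}(R)$ is empty and $\gamma=0$, against $\gamma>0$; and if $R$ is reduced then $\bigcup_{P\in\mathrm{Min}(R)}P=Z(R)=\bigcup_{M\in\mathrm{Max}(R)}M$, so by prime avoidance every prime is both minimal and maximal, whence $\dim R=0$ and $R$ is Artinian (done). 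Thus I may assume $R$ is non-reduced and not a domain; and if moreover $R$ is local, Proposition \ref{nlocal} already yields that $R$ is Artinian. The remaining content is therefore the genuinely non-local, non-reduced situation.

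The core is to show that a non-Artinian such $R$ must have $\gamma(\Bbb{AG}(R))\in\{0,\infty\}$, contradicting $0<\gamma<\infty$. If $R$ is not Artinian then $\dim R\ge1$, so some minimal (hence associated) prime $P=\mathrm{Ann}(c)$ satisfies $\dim R/P\ge1$; then $Rc\cong R/P$ is a non-field Noetherian domain, so $Rc$ contains infinitely many ideals of $R$. Two configurations force $\gamma=\infty$. First, since $R$ is non-reduced I can choose $c_0\neq0$ with $c_0^2=0$, so $(Rc_0)^2=0$; if such an ideal $Rc_0$ has infinitely many sub-ideals, any two of them annihilate each other, so $\Bbb{I}(Rc_0)^*$ induces an infinite complete subgraph and, by Formula~(1.1), $\gamma=\infty$. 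Second, for the principal family $K_n=Rct^n$ ($\bar t$ a nonzero non-unit of $R/P$) one computes ${\rm Ann}(K_n)=P$ for all $n$, so every nonzero ideal contained in $P$ is adjacent to every $K_n$; if $|\Bbb{I}(P)^*|\ge3$ this yields $K_{\infty,3}$ and, by Formula~(1.2), $\gamma=\infty$.

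The remaining, delicate configuration is the ``thin'' one: no square-zero ideal has infinitely many sub-ideals, and every common annihilator $P$ produced above has $|\Bbb{I}(P)^*|\le2$. Here each member of the infinite family has degree at most $2$, attached to a common set of at most two hubs, and I would argue --- by extending to the semilocal, non-reduced setting the degree analysis that produces the planar graph of Figure 1 in the proof of Proposition \ref{nlocal} --- that the whole of $\Bbb{AG}(R)$ then embeds in the plane, giving $\gamma=0$ and contradicting $\gamma>0$. This global planarity step is the main obstacle: one must show that the finitely many high-degree vertices (coming from the socle and the finitely many maximal ideals) together with the infinite low-degree families can never be forced to require even a single handle, so that the only alternatives to an infinite clique or a $K_{\infty,3}$ are genuinely planar. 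Excluding all three possibilities under $0<\gamma<\infty$ forces $R$ to be Artinian, and Theorem \ref{2.7} then completes the proof.
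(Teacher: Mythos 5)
Your reduction is sound in outline: Artinianness is indeed the whole content, since Theorem \ref{2.7} then finishes, and your treatment of the local case via Proposition \ref{nlocal} and of the reduced case via associated primes and prime avoidance is fine. But the non-local, non-reduced, non-Artinian case is where the proof actually lives, and there you have a genuine gap. After ruling out an infinite clique inside a square-zero ideal and a $K_{3,\infty}$ built from $\Bbb{I}(P)^*$ against the chain $Rct^n$, you are left with the ``thin'' configuration, and your argument for it is only a declared intention to ``extend the degree analysis that produces Figure 1'' to the semilocal setting --- you even flag it yourself as the main obstacle. Nothing in the sketch controls adjacencies among ideals lying under different maximal ideals, nor shows that at most finitely many hubs absorb every high-degree vertex, so the possibility $0<\gamma(\Bbb{AG}(R))<\infty$ is not actually excluded in that case. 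As written, the proof is incomplete.

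The paper avoids all of this by quoting Theorem 3.5 of \cite{AB2012}, which already asserts that a Noetherian ring whose non-trivial ideals are all vertices of $\Bbb{AG}(R)$ with $0<\gamma(\Bbb{AG}(R))<\infty$ is either an Artinian ring with only finitely many ideals or a (local) Gorenstein ring; the corollary is then two lines, namely Proposition \ref{nlocal} followed by Theorem \ref{2.7}. Your plan essentially amounts to reproving that external theorem from scratch, and the part you leave open is precisely its hard, non-local half. Either invoke that result, as the paper does, or carry out the full degree/planarity analysis for semilocal non-reduced Noetherian rings; without one of the two the argument does not close.
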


\begin{pproof}
By \cite[Theorem 3.5]{AB2012}, $R$ is a Gorenstein ring or $R$ is
an Artinian ring with only finitely many ideals. So we may assume that $R$ is a Gorenstein ring. By Proposition \ref{nlocal}, $R$ is Artinian, and thus by Theorem \ref{2.7}, $R$ has only finitely many ideals.\hfill $\square$
\end{pproof}
\\

We are now ready to state our  second main result. As a consequence of Proposition \ref{nlocal} and  Theorem \ref{2.7}, we obtain the following.

\begin{ttheo}\label{last} Let $R$ be a Noetherian ring such that all non-trivial ideals of $R$ are vertices of $\Bbb{AG}(R)$ and $\gamma(\Bbb{AG}(R))<\infty$. If $R$ is a non-local ring, then $R$ is an Artinian ring with only finitely many ideals. Otherwise,  either $(R,\mathfrak{m})$ is a Gorenstein ring such that $\mathfrak{m}^n\neq \mathfrak{m}^{n+1}$ (for every positive integer $n$), $\Bbb{AG}(R)$ is a subgraph of Figure 1, and $\gamma(\Bbb{AG}(R))=0$ or $R$  is an local Artinian ring with maximal ideal $\mathfrak{m}$ such that $\mathfrak{m}^{t}\neq (0)$ and $\mathfrak{m}^{t+1}=(0)$. We have the following:

\noindent (a) If  $t=1$, then $R$ is either finite or a special principal ideal ring.

\noindent (b) If  $t=2$, then one of the following holds:

\noindent (b.1) $R$ is finite;

\noindent (b.2) $R$ is a special principal ideal ring;

\noindent (b.3)  ${\rm v.dim}_{R/{\mathfrak{m}}}{\mathfrak{m}}^{2}/{\mathfrak{m}}^{3}=1$, ${\rm v.dim}_{R/{\mathfrak{m}}}{\mathfrak{m}}/{\mathfrak{m}}^{2}=2$, $R$ has infinitely many ideals, $\Bbb{AG}(R)$ is a subgraph of Figure~2, and $\gamma(\Bbb{AG}(R))=0$.

\noindent (c) If $t=3$, then one of the following holds:

\noindent (c.1) $R$ is finite;

\noindent (c.2)   $R$ is a special principal ideal ring;

\noindent (c.3)  ${\rm v.dim}_{R/{\mathfrak{m}}}{\mathfrak{m}}/{\mathfrak{m}}^{2}=2$, ${\rm v.dim}_{R/{\mathfrak{m}}}{\mathfrak{m}}^{2}/{\mathfrak{m}}^{3}={\rm v.dim}_{R/{\mathfrak{m}}}{\mathfrak{m}}^{3}/{\mathfrak{m}}^{4}=1$, $R$ has infinitely many ideals, $\Bbb{AG}(R)$ is a subgraph of Figure 3, and  $\gamma(\Bbb{AG}(R))=0$.

\noindent (d) If $t\geq 4$, then $R$ is either finite or a special principal ideal ring.
\end{ttheo}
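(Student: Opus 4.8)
The plan is to assemble the statement from \cite[Theorem 3.5]{AB2012}, Proposition~\ref{nlocal} and Theorem~\ref{2.7}, re-reading the case analyses in the latter two with the genus-zero alternative now admitted as a genuine conclusion instead of a contradiction. First I would invoke \cite[Theorem 3.5]{AB2012}: since $R$ is Noetherian with every non-trivial ideal a vertex and $\gamma(\Bbb{AG}(R))<\infty$, either $(R,\mathfrak{m})$ is a local Gorenstein ring or $R$ is Artinian with only finitely many ideals. A non-local ring cannot be local Gorenstein, so the non-local case falls at once into the second alternative; this proves the non-local statement for every value $\gamma(\Bbb{AG}(R))<\infty$, including $\gamma=0$. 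Hence it remains to treat $R$ local, where by the same dichotomy $R$ is Gorenstein or Artinian with finitely many ideals.

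Next I would dispose of the local Gorenstein ring. If $\mathfrak{m}^n=\mathfrak{m}^{n+1}$ for some $n$, then Nakayama gives $\mathfrak{m}^n=(0)$, every prime of $R$ contains $\mathfrak{m}^n$ and so equals $\mathfrak{m}$, and $R$ is Artinian, which is deferred to the Artinian analysis below. If instead $\mathfrak{m}^n\neq\mathfrak{m}^{n+1}$ for all $n$, then $R$ is not Artinian, so by Proposition~\ref{nlocal} it cannot satisfy $0<\gamma(\Bbb{AG}(R))<\infty$; together with $\gamma(\Bbb{AG}(R))<\infty$ this forces $\gamma(\Bbb{AG}(R))=0$. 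The key observation is that Case~2 of the proof of Proposition~\ref{nlocal}, which under the hypothesis $\gamma>0$ terminated in a contradiction precisely by displaying $\Bbb{AG}(R)$ as a subgraph of Figure~1, is now read forward: $\Bbb{AG}(R)$ is a subgraph of Figure~1 and $\gamma(\Bbb{AG}(R))=0$. This yields the first local alternative.

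For $R$ local Artinian I would fix $t$ with $\mathfrak{m}^t\neq(0)=\mathfrak{m}^{t+1}$ and rerun the case-by-case argument of Theorem~\ref{2.7}, now sorting its internal contradictions into two kinds. Contradictions obtained from an infinite complete graph $K_n$ or complete bipartite graph $K_{m,n}$ (via Formulas (1.1) and (1.2), which give $\gamma=\infty$) remain genuine, since $\gamma=\infty$ is incompatible with $\gamma(\Bbb{AG}(R))<\infty$. Contradictions obtained only by embedding $\Bbb{AG}(R)$ into Figure~2 or Figure~3 (which merely establish $\gamma=0$) are no longer contradictions: when $t=2$ or $t=3$ with ${\rm v.dim}_{R/\mathfrak{m}}\mathfrak{m}/\mathfrak{m}^2=2$ these become exactly conclusions (b.3) and (c.3), with $R$ having infinitely many ideals because $|R/\mathfrak{m}|=\infty$. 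For $t=1$ and $t\geq 4$ the analysis of Theorem~\ref{2.7} forces ${\rm v.dim}_{R/\mathfrak{m}}\mathfrak{m}/\mathfrak{m}^2=1$ (the alternative produced an infinite $K_n$ or $K_{m,n}$, hence $\gamma=\infty$), so Lemma~\ref{2.3} makes $R$ a special principal ideal ring, the finite case arising when $|R/\mathfrak{m}|<\infty$; this gives (a) and (d). The remaining branches, $t=2,3$ with ${\rm v.dim}_{R/\mathfrak{m}}\mathfrak{m}/\mathfrak{m}^2=1$, similarly produce a special principal ideal ring (or a finite ring) by Lemma~\ref{2.3}, completing (b) and (c).

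I expect the main obstacle to be bookkeeping rather than new mathematics: for each contradiction appearing in the proofs of Proposition~\ref{nlocal} and Theorem~\ref{2.7} one must certify its source --- an infinite $K_n$ or $K_{m,n}$ (a true contradiction forcing $\gamma=\infty$) versus a mere embedding into Figures~1--3 (no contradiction once the assumption $\gamma>0$ is dropped, but precisely the genus-zero conclusion sought). A secondary check is that (b.3) and (c.3) are consistent with the Gorenstein side of the dichotomy, since they arise from Gorenstein Artinian rings with $|R/\mathfrak{m}|=\infty$ and ${\rm v.dim}_{R/\mathfrak{m}}\mathfrak{m}/\mathfrak{m}^2=2$, which genuinely possess infinitely many ideals; and that the non-local reduction invokes \cite[Theorem 3.5]{AB2012} only in the regime $\gamma(\Bbb{AG}(R))<\infty$.
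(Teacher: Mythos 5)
Your proposal is correct and follows essentially the same route as the paper: the non-local case via \cite[Theorem 3.5]{AB2012}, the local non-Artinian Gorenstein case by reading the Figure~1 embedding in the proof of Proposition~\ref{nlocal} forward as a genus-zero conclusion, and the local Artinian case by rerunning the $t$-by-$t$ analysis of Theorem~\ref{2.7} with the $K_{m,n}$/$K_n$ contradictions kept and the Figure~2/Figure~3 embeddings converted into conclusions (b.3) and (c.3). Your explicit sorting of the contradictions by source is exactly the bookkeeping the paper's terse ``by Case $i$ in Theorem~\ref{2.7}'' references rely on.
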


\begin{pproof}

If $R$ is a non-local by \cite[Theorem 3.5]{AB2012}, $R$ is an Artinian ring with only finitely many ideals. So, we assume that $R$ is a local ring.
As in the proof of Proposition \ref{nlocal}, we conclude that either  either $(R,\mathfrak{m})$ is a Gorenstein ring such that $\mathfrak{m}^n\neq \mathfrak{m}^{n+1}$ (for every positive integer $n$), $\Bbb{AG}(R)$ is a subgraph of Figure 1, and $\gamma(\Bbb{AG}(R))=0$ or $R$  is an Artinian ring. So assume that  $R$ is a local Artinian ring with maximal ideal $\mathfrak{m}$. Then there exists positive integer $t$ such that $\mathfrak{m}^{t}\neq (0)$ and $\mathfrak{m}^{t+1}=(0)$. If $|R/\mathfrak{m}|<\infty$, then one can easily check that $R$ is finite. Now, we may assume that $|R/\mathfrak{m}|=\infty$. We have the following cases according to the value of $t$:

{\bf Case 1:} $t=1$, i.e., $\mathfrak{m}^{2}=(0)$. Then by Case $1$ in Theorem  \ref{2.7}, $R$ is a
special principal ideal ring.

{\bf Case 2:} $t=2$, i.e., $\mathfrak{m}^{3}=(0)$. By Case 2 in Theorem   \ref{2.7}, either  $R$ is a
special principal ideal ring or ${\rm v.dim}_{R/{\mathfrak{m}}}{\mathfrak{m}}^{2}/{\mathfrak{m}}^{3}=1$, ${\rm v.dim}_{R/{\mathfrak{m}}}{\mathfrak{m}}/{\mathfrak{m}}^{2}=2$, $R$ has infinitely many ideals, $\Bbb{AG}(R)$ is a subgraph of Figure 2, and so $\gamma(\Bbb{AG}(R))=0$.

{\bf Case 3:} $t=3$, i.e., $\mathfrak{m}^{4}=(0)$. By Case 3 in Theorem   \ref{2.7}, either  $R$ is a
special principal ideal ring or  ${\rm v.dim}_{R/{\mathfrak{m}}}{\mathfrak{m}}/{\mathfrak{m}}^{2}=2$, ${\rm v.dim}_{R/{\mathfrak{m}}}{\mathfrak{m}}^{2}/{\mathfrak{m}}^{3}={\rm v.dim}_{R/{\mathfrak{m}}}{\mathfrak{m}}^{3}/{\mathfrak{m}}^{4}=1$, $R$ has infinitely many ideals, $\Bbb{AG}(R)$ is a subgraph of Figure 3, and so $\gamma(\Bbb{AG}(R))=0$.

 {\bf Case 4:} $t\geq 4$. By Case 4 in Theorem   \ref{2.7},  $R$ is a
special principal ideal ring.\hfill $\square$

\end{pproof}

Farid Aliniaeifard, Department of Mathematics, Brock University, St. Catharines, Ontario, Canada L2S 3A1. Tel: (905) 688-5550 and Fax:(905) 378-5713. E-mail address: fa11da@brocku.ca
\\

Mahmood Behboodi, Department of Mathematical of Sciences, Isfahan University of Technology, Isfahan, Iran 84156-8311. Tel : (+98)(311) 391-3612 and Fax : (+98)(311) 391-3602. E-mail address: mbehbood@cc.iut.ac.ir
\\

Yuanlin Li, Department of Mathematics, Brock University, St. Catharines, Ontario, Canada L2S 3A1. Tel: (905) 688-5550 ext. 4626  and Fax:(905) 378-5713. E-mail address: yli@brocku.ca

\end{document}